\documentclass[10pt]{amsart}
\usepackage{amsmath}
\usepackage{amsfonts}
\usepackage{amssymb}
\usepackage{amsthm}
\usepackage{url}
\usepackage{dsfont}
\usepackage{graphicx}
\usepackage{caption}
\usepackage{subcaption}
\usepackage{comment} 
\usepackage{stmaryrd}
\usepackage{hyperref}
\usepackage{todonotes}
\usepackage{color}
\usepackage{enumerate}
\usepackage{stmaryrd}
\usepackage{xypic}
\usepackage{tikz-cd}
\usepackage[english,french]{babel}

\newcommand{\git}{\mathbin{
  \mathchoice{/\mkern-6mu/}% \displaystyle
    {/\mkern-6mu/}% \textstyle
    {/\mkern-5mu/}% \scriptstyle
    {/\mkern-5mu/}}}% \scriptscriptstyle

\numberwithin{equation}{section}

\newtheorem{proposition}{Proposition}[section]
\newtheorem{lemma}[proposition]{Lemma}
\newtheorem{theorem}[proposition]{Theorem}
\newtheorem{corollary}[proposition]{Corollary}

\theoremstyle{definition}
\newtheorem{remark}[proposition]{Remark}
\newtheorem{definition}[proposition]{Definition}
\newtheorem{example}[proposition]{Example}

\DeclareMathOperator{\id}{id}

\DeclareMathOperator{\Aut}{Aut}
\DeclareMathOperator{\Ric}{Ric}

\DeclareMathOperator{\inte}{int}
\DeclareMathOperator{\CM}{CM}
\DeclareMathOperator{\Ima}{Im}
\DeclareMathOperator{\red}{red}

\newcommand{\R}{\mathbb{R}}
\newcommand{\C}{\mathbb{C}}

\newcommand{\Q}{\mathbb{Q}}

\newcommand{\pr}{\mathbb{P}}
\renewcommand{\epsilon}{\varepsilon}
\newcommand{\scH}{\mathcal{H}}

\newcommand{\M}{\mathcal{M}}

\renewcommand{\O}{\mathcal{O}}

\newcommand{\ddb}{i\partial \bar\partial}

\newcommand{\mfh}{\mathfrak{h}}

\newcommand{\F}{\mathcal{F}}
\newcommand{\B}{\mathcal{B}}

\renewcommand{\L}{\mathcal{L}}
\newcommand{\J}{\mathcal{J}}
\newcommand{\db}{\bar\partial}
\renewcommand{\H}{\mathcal{H}}
\renewcommand{\M}{\mathcal{M}}
\newcommand{\X}{\mathcal{X}}
\newcommand{\Y}{\mathcal{Y}}

\newcommand{\scK}{\mathcal{K}}

\newcommand{\newabstract}[1]{%
  \par\bigskip
  \csname otherlanguage*\endcsname{#1}%
  \csname captions#1\endcsname
  \item[\hskip\labelsep\scshape\abstractname.]
}

\pagestyle{headings} \setcounter{tocdepth}{2}
\title[Moduli of polarised manifolds via canonical K\"ahler metrics]{Moduli of polarised manifolds via canonical K\"ahler metrics}

\author[Ruadha\'i Dervan and Philipp Naumann]{Ruadha\'i Dervan and Philipp Naumann}

\address{Ruadha\'i Dervan, DPMMS, Centre for Mathematical Sciences, Wilberforce Road, Cambridge CB3 0WB, United Kingdom}
\email{Ruadhai.Dervan@glasgow.ac.uk}

\address{Philipp Naumann, Universit\"at Bayreuth, Mathematisches Institut, Universit\"atsstra{\ss}e 30, 95440 Bayreuth, Germany}
\email{philipp.naumann@uni-bayreuth.de }

\begin{document}

\selectlanguage{english}
\begin{abstract} We construct a moduli space of  polarised manifolds which admit a constant scalar curvature K\"ahler metric. We show that this space admits a natural K\"ahler metric.

\end{abstract}

\selectlanguage{english}

\maketitle

\section{Introduction}

One of the main goals of algebraic geometry is to construct moduli spaces of polarised varieties, that is, varieties endowed with an ample line bundle. It has  long been understood that in order to obtain a reasonably well behaved moduli space, one must impose a stability condition. This was the primary motivation for Mumford's Geometric Invariant Theory (GIT) \cite{GIT}. While this was successful for curves, it is now known that GIT techniques essentially fail to produce reasonable moduli spaces of polarised varieties in higher dimensions \cite{WX,NSB}. One way to attempt to rectify this is to replace GIT stability with K-stability, as defined by Tian and Donaldson \cite{GT,SD4}. While K-stability is motivated by GIT, it is not a genuine GIT notion, meaning that constructing moduli spaces of K-polystable varieties by algebro-geometric techniques is out of reach at present in general. 

Here we take an analytic approach, under the assumption that the polarised variety is actually smooth. In this case K-polystability is conjectured by Yau, Tian and Donaldson to be equivalent to the existence of a constant scalar curvature K\"ahler (cscK) metric on the polarised manifold \cite{STY,GT,SD4}. Thus the existence of a cscK metric can be seen as a sort-of analytic stability condition.  This perspective allows us in addition to consider non-projective K\"ahler manifolds, where by a polarisation we shall mean a K\"ahler class, and where there is still an analogue of the Yau--Tian--Donaldson conjecture \cite{DRoss,RD,ZSD,ZSD2, DXZ, PMP}. Our main result is the following:

\begin{theorem}\label{mainthm} There exists a Hausdorff complex space which is a moduli space of polarised manifolds which admit a cscK metric. \end{theorem}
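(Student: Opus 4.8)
The plan is to construct the moduli space by analytic gluing methods, using the cscK metric as a canonical representative to rigidify the deformation theory. The key conceptual point is that the Yau--Tian--Donaldson philosophy replaces the algebraic stability condition with the existence of a canonical metric, and the presence of this metric should provide exactly the compactness and separatedness needed for a Hausdorff moduli space. I would begin by setting up the local deformation theory: given a polarised manifold $(X, L)$ admitting a cscK metric $\omega \in c_1(L)$, the infinitesimal deformations are controlled by the Kuranishi space of $X$, and the automorphism group $\Aut(X, L)$ acts on this space. Since $X$ admits a cscK metric, a theorem in the spirit of Matsushima--Lichnerowicz tells us that $\Aut(X,L)$ is reductive, and its maximal compact subgroup $K$ preserves $\omega$. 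The local model for the moduli space near $[(X,L)]$ should then be a GIT-type or symplectic quotient of the Kuranishi space by $K$ (equivalently by $\Aut(X,L)$), where the stability/moment-map condition singles out precisely the nearby manifolds that still admit a cscK metric.

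The key steps, in order, are as follows. First, establish the Kuranishi family with its $K$-action and show that the cscK condition on nearby fibres corresponds to a moment map condition, so that the local moduli space is realised as $\mu^{-1}(0)/K$ for a suitable moment map $\mu$ on the Kuranishi space. This uses the interpretation of the scalar curvature as a moment map for the action of the group of Hamiltonian diffeomorphisms, together with an infinite-dimensional-to-finite-dimensional reduction. Second, prove that these local charts are Hausdorff and that they glue: distinct cscK manifolds are not isomorphic, and the uniqueness of cscK metrics up to automorphism (a theorem of Berman--Berndtsson/Chen--Cheng type) guarantees that the charts overlap compatibly and that the resulting space separates points. Third, assemble the local quotients into a global complex space, verifying that the transition maps are holomorphic and that the complex-analytic structure is well-defined; here one checks that the Kuranishi slices are compatible on overlaps via the universal property of the Kuranishi family.

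The main obstacle I expect is the \emph{Hausdorff} (separatedness) property, which is exactly where naive approaches fail and where the cscK hypothesis must do real work. Concretely, one must rule out the phenomenon of two non-isomorphic cscK manifolds being limits of a common family of polarised manifolds, i.e. jumping phenomena in which the automorphism group or the complex structure degenerates. The tool for this is the uniqueness of cscK metrics together with a properness/compactness statement: I would argue that if two cscK manifolds sit in the closure of a single orbit in the Kuranishi space, then a cscK metric on one can be transported to the other, forcing an isomorphism. Making this precise requires controlling the behaviour of the cscK metrics in families, which is a delicate analytic continuity-and-convergence argument and is likely the technical heart of the paper. A secondary difficulty is verifying that the local symplectic/GIT quotients carry a genuine complex-analytic (as opposed to merely topological) structure and that this structure is canonical enough to glue; this rests on the holomorphicity of the Kuranishi construction and on showing the moment-map level set behaves like a complex-analytic subvariety.
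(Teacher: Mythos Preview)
Your outline matches the paper's architecture closely: local Kuranishi slice with $K$-action, moment-map/GIT quotient as local chart, gluing via canonicity of cscK metrics. Two points, however, are underspecified in a way that matters.

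First, the gluing does \emph{not} go through using only uniqueness of cscK metrics on a fixed manifold (Berman--Berndtsson). When you compare two charts $\M_X$ and $\M_Y$, the map $W_X\to W_Y$ coming from completeness of the Kuranishi family sends a closed $G_X$-orbit to a $G_Y$-orbit that need not be closed: a cscK manifold may land on a strictly semistable point of the other slice. To show the induced map $W_X\to \M_Y$ is $G_X$-invariant you must compare the \emph{polystable degenerations} of such points, and for this the paper invokes Chen--Sun's theorem that a polarised manifold cannot degenerate to two non-isomorphic cscK manifolds. This is a genuinely different and deeper input than Berman--Berndtsson, and your sketch (``transport a cscK metric from one to the other'') does not supply it.

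Second, in the presence of continuous automorphisms the Kuranishi family is only \emph{versal} (complete), not universal, so there is no universal property to appeal to for slice compatibility. The paper handles this by producing holomorphic maps in both directions between the local quotients and checking they are mutual inverses; a bijective holomorphic map of complex spaces is not automatically a biholomorphism, so this step is not formal. Relatedly, the paper uses analytic/Stein GIT (Heinzner--Snow) to take the quotient by the complexified group $G$, together with Inoue's stabiliser-preservation argument to identify the finite-dimensional stabiliser $K_p\subset K$ with the full Hamiltonian isometry group of the fibre; this is what makes the local quotient genuinely parametrise isomorphism classes. Finally, Hausdorffness itself is the easy part once the space is built: each chart is homeomorphic to $\mu^{-1}(0)/K$, a quotient of a Hausdorff space by a compact group. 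The real work is the gluing, and that is where Chen--Sun enters.
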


The moduli space satisfies the properties that its points are in correspondence with cscK polarised manifolds, and any family of cscK polarised manifolds induces a map to the moduli space which is compatible with this correspondence. While the moduli space is disconnected, in practice, one is typically interested in a single connected component containing a polarised manifold of interest, and our proof actually  constructs the moduli space component-by-component.

This construction provides an \emph{analytic} solution to the moduli problem for polarised manifolds, and hence in particular smooth polarised varieties, constructing a moduli space as a Hausdorff complex space. We should say immediately that the novelty in our construction is solely in the case of polarised manifolds admitting continuous automorphisms. In the case of discrete automorphism group, the existence of such a moduli space is an important result of Fujiki--Schumacher \cite{FS}. As is well understood in moduli problems, the case when the objects admit continuous automorphisms presents new challenges, as polystability is not an open condition, even in the analytic topology. This is reflected for us by the fact that the existence of a cscK metric is not an open condition in families of polarised manifolds \cite{SD5,GT}. We thus take quite a different approach to the construction, which can be sketched as follows.

We begin with a fixed polarised manifold and consider its Kuranishi space. The deformation theory of cscK manifolds due to Ortu, Br\"onnle and Sz\'ekelyhidi then gives an understanding of which deformations admit a cscK metric in terms of a local GIT condition \cite{ortu,TB,GS}. We give a more thorough understanding of the orbits of this action and their stabilisers, using two fundamental results: firstly, the uniqueness of cscK metrics on a polarised manifold up to automorphisms \cite{SD2,BB}; secondly, the work of Inoue on automorphism groups of deformations \cite{EI}. This allows us to construct a local moduli space around the polarised manifold under consideration by using Heinzner--Loose's theory of GIT quotients of complex spaces \cite{HL}. We then show that these local moduli spaces can be glued; the proof of this crucially uses Chen--Sun's deep work on the uniqueness of cscK degenerations \cite{CS}.

An important aspect   of the moduli space that we establish is its Hausdorffness: one of the basic reasons for using stability conditions in moduli theory is to create Hausdorff moduli spaces, as Hausforffness fails for arbitrary families of polarised manifolds, even in simple cases such as Fano manifolds or projective bundles. This property implies various results around the behaviour of cscK manifolds in families, which we state as the following corollary of Theorem \ref{mainthm}. To explain the statement, define a polarised manifold to be \emph{analytically K-semistable} if it admits a degeneration to cscK manifold (in the sense of Definition \ref{def:semistable}). Analytically K-semistable manifolds are an analytic analogue of K-semistable varieties.

\begin{corollary}\label{intro-degen}

Let $C$ be a Riemann surface, with $0 \in C$. Suppose $$\xymatrix@C=14pt@R=16pt{
&(\Y,\alpha_{\Y}) \ar[dr]&& \mathcal (\tilde \Y,\alpha_{\tilde \Y}) \ar[dl] \\
 &   & C }
$$ are two polarised families such that $({\Y},\alpha_{\Y})|_{C\backslash\{0\}} \cong  (\tilde \Y,\alpha_{\tilde \Y})|_{C\backslash\{0\}}.$

\begin{enumerate}[(i)]
\item If  $(\Y_0,\alpha_{\Y_0})$ and $(\tilde \Y_0,\alpha_{\tilde \Y_0})$ are analytically K-semistable, then they degenerate to the same cscK manifold. 
\item  If  $(\Y_0,\alpha_{\Y_0})$ and $(\tilde \Y_0,\alpha_{\tilde \Y_0})$ admit a cscK metric, then $(\Y_0,\alpha_{\Y_0})$ and $(\tilde \Y_0,\alpha_{\tilde \Y_0})$ are isomorphic.

\end{enumerate}

\end{corollary} \noindent Part $(ii)$ gives a version of the valuative criterion for separatedness for the moduli space of cscK manifolds. In the Fano K\"ahler--Einstein setting, this is an important result due to Spotti--Sun--Yao \cite{SSY} and Li--Wang--Xu \cite{LWX}.

Once one has constructed a moduli space, natural questions surrounding its geometry arise. Our next result shows that the moduli space is actually a \emph{K\"ahler} space. Denote by $\M$ the moduli space of cscK manifolds.

\begin{theorem}\label{WP} $\M$ admits a natural Weil--Petersson type K\"ahler metric $\eta$.  On the moduli space of projective cscK manifolds, there is a natural $\Q$-line bundle $\L\to\M$. In this case, $\eta$ is the curvature of a Hermitian metric on $\L$. \end{theorem}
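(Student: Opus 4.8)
The plan is to build $\eta$ first as a pointwise $L^2$ pairing on each local moduli chart, then to identify it with a fibre integral in order to deduce Kählerness, and finally to match this fibre integral with the curvature of the CM line bundle in the projective setting.

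\emph{Local construction.} Around a point of $\M$ I would work on the Br\"onnle-Sz\'ekelyhidi slice, over which there is a family $\pi\colon \X \to B$ whose relevant fibres carry cscK metrics. Solving the cscK equation fibrewise and invoking the implicit function theorem (the linearised operator being invertible transverse to $\Aut$), the cscK representatives $\omega_b \in \alpha_b$ assemble into a relative K\"ahler form $\omega$ on $\X$ which I may take to be closed on the total space. For $v \in T_bB$ the $\omega$-orthogonal horizontal lift produces a harmonic Kodaira-Spencer representative $A_v \in \H^{0,1}(X_b, T^{1,0}X_b)$, and I set
$$\eta(v,\bar w) = \int_{X_b} \langle A_v, A_w\rangle_{\omega_b}\, \frac{\omega_b^n}{n!}.$$
This is manifestly a positive Hermitian form, vanishing exactly on directions tangent to $\Aut$-orbits, hence positive definite on the genuine moduli directions.

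\emph{Descent and gluing.} Uniqueness of the cscK metric up to automorphism makes $\omega_b$, and hence $\eta$, canonical; combined with the fact that $A_v$ is horizontal and harmonic, the contraction of $\eta$ with any orbit direction vanishes (such directions are infinitesimal automorphisms, whose harmonic Kodaira-Spencer representatives are zero). Thus $\eta$ is invariant and basic for the reductive group acting on the slice, so it descends to the local GIT quotient. Compatibility of these local forms follows from the same uniqueness together with the gluing of local moduli already established, producing a global $\eta$ on $\M$, with a $\Q$-structure entering through finite stabilisers.

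\emph{Kählerness and the CM line bundle.} To show $\eta$ is closed I would prove the second-variation identity expressing the pointwise $L^2$ form as a fibre integral of forms that are closed on $\X$, schematically of the shape
$$\eta = \int_{\X/B}\left( \bar S\, \frac{\omega^{n+1}}{(n+1)!} - \Ric(\omega)\wedge \frac{\omega^n}{n!}\right),$$
following the Fujiki-Schumacher computation adapted to the cscK normalisation. Since $\omega$ and $\Ric(\omega)$ are closed on the total space and fibre integration commutes with $d$, the right-hand side is $d$-closed; it is precisely the cscK condition that forces this cohomological expression to agree with the a priori only pointwise-defined $L^2$ metric. In the projective case I would take $\L$ to be the CM $\Q$-line bundle assembled from Deligne pairings of the relative polarisation and the relative canonical bundle, equipped with the induced Deligne-pairing metric; the curvature formula for Deligne pairings computes $c_1(\L)$ as exactly the above fibre integral, whence the curvature equals $\eta$ and $\eta \in c_1(\L)$.

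The main obstacle is the Kählerness, specifically the second-variation identity equating the positive $L^2$ form with the closed fibre integral. This computation is delicate in the presence of continuous automorphisms: the universal family exists only after passing to the GIT quotient, so one must verify that the $L^2$ form is genuinely basic and that the harmonic/horizontal decomposition descends compatibly, which is where the strengthened slice theory is needed.
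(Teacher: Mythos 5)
Your outline reproduces the Fujiki--Schumacher picture (define the $L^2$ pairing of harmonic Kodaira--Spencer representatives fibrewise, identify it with a closed fibre integral, match it with the Deligne-pairing curvature of the CM bundle), and in the discrete-automorphism case this is essentially a complete argument, differing from the paper only in ordering: the paper starts from the fibre integral $\int_{X/B}\bigl(n\mu\,\omega^{n+1}-(n+1)\rho\wedge\omega^n\bigr)$, proves it is independent of the choice of fibrewise cscK representative via the relative Mabuchi functional, and only then invokes Fujiki's identification with the $L^2$ metric to get positivity. But in the presence of continuous automorphisms there are two genuine gaps. First, there is no universal family over $\M$, and the cscK (polystable) locus is not open in the Kuranishi slice, so ``solving the cscK equation fibrewise by the implicit function theorem'' only produces a smoothly varying family of cscK metrics over each stratum of $\M$ on which the automorphism group is locally constant; the paper obtains these via the Heinzner--Huckleberry--Loose local product structure $W_U\cong U\times\G$ and a smooth section into $\mu^{-1}(0)$. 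This yields a K\"ahler form $\eta_j$ on each stratum only. Since a K\"ahler metric on the singular space $\M$ means a system of continuous strictly plurisubharmonic local potentials, the essential remaining step --- which you identify neither as an obstacle nor as a step --- is to prove that the stratum-wise potentials extend \emph{continuously} across the strata where the automorphism group jumps. The paper does this by comparing each $\eta_j$ against a fixed reference fibre integral $\eta_\Omega$ on the Kuranishi space, so that the discrepancy is $\ddb$ of a relative Mabuchi energy, and then proving convergence of these energies using the fact that the cscK metrics produced by the Br\"onnle--Sz\'ekelyhidi perturbation vary continuously in the slice. The obstacle you flag (the second-variation identity equating the $L^2$ form with the fibre integral) is the part already available in the literature; the cross-strata continuity is the new difficulty.

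Second, the CM $\Q$-line bundle cannot be ``assembled from Deligne pairings'' directly over $\M$, for the same reason: there is no family over $\M$. One must construct $\L_{CM}$ equivariantly on each Kuranishi chart, descend it through the analytic GIT quotient (the paper uses Sjamaar's descent theorem, which is also where the $\Q$-structure comes from: the denominator is the number of connected components of $\Aut(X,L)$, uniformly bounded via Matsusaka's big theorem, not the order of finite stabilisers), and then glue the descended bundles across charts by verifying a cocycle condition for the induced transition functions, which rests on the functoriality of the CM bundle and on the gluing isomorphisms of the local moduli spaces. Your final step, $\eta\in c_1(\L)$ via the change-of-metric formula for Deligne metrics and the vanishing of the Mabuchi functional between two fibrewise cscK metrics, does match the paper's argument once the descent and gluing are in place.
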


It follows that any compact complex subspace of $\M$ is projective, when we consider the moduli space of cscK manifolds where the polarisation satisfies $\alpha=c_1(L)$ for $L$ ample, so that the manifolds being parametrised are projective. What we establish is that both the CM line bundle and the Weil--Petersson metric defined on the Kuranishi space actually descend to the moduli space $\M$ via our local quotient construction, so it is natural to call $\eta$ the Weil--Petersson metric on $\M$ and $\L$ the CM line bundle. The above is due to Li--Wang--Xu in the Fano K\"ahler--Einstein case \cite{LWX2} and Fujiki--Schumacher in the case of discrete automorphism group \cite{FS}. We also establish some basic properties of the CM line bundle and the Weil--Petersson metric in relation to families of cscK manifolds, which we expect to be useful in applications.

Typically in moduli problems, in the presence of automorphisms one expects a moduli space to naturally be a stack (in the category of complex spaces), and in particular to be a version of a good moduli space in the sense of Alper \cite{JA}. We mention that our construction does have more structure than we have described above: it is essentially an analytic moduli space in the sense of Odaka--Spotti--Sun \cite[Definition 3.14]{OSS}, which is a condition on the moduli space locally having the structure of a quotient of a deformation space.  

A natural and important question is whether $\M$ admits a modular compactification, notably in the setting that the manifolds under consideration are projective and we take our K\"ahler classes to be induced by ample line bundles. In this case, under further assumptions on  the geometry of the polarised varieties being parametrised, it may be possible to compactify by including singular K-polystable varieties at the boundary. If this were possible, one would expect the CM line bundle to extend to the compactification, and hence one would further expect the compactification to be projective. This would be one route to showing that $\M$ admits an algebraic structure, which is an important question inaccessible using the techniques we employ in the present work. It is, in general, unrealistic to expect this strategy to be successful without assumptions on the underlying geometry of the polarised varieties, and it may be instead that one could compactify by using something more general than varieties, perhaps using a class of schemes of infinite type, by analogy with the non-finitely generated filtrations of Witt Nystr\"om and Sz\'ekelyhidi \cite{DWN,GS-filtrations}. It is also natural to ask whether these ideas have analogue in the non-projective setting (see Berman \cite[Section 5]{RB-MA} for a related discussion).

Lastly, we remark that throughout we work with \emph{reduced} complex spaces. The reason for this is the lack of literature on analytic GIT for non-reduced complex spaces. This forces us to work with the induced reduced complex space of the Kuranishi space, and means that for families of polarised manifolds over a \emph{non-reduced} base $B$, we do not obtain a map to the moduli space. Instead if $B_{\red}\hookrightarrow B$ is the reduction, then after pulling back the family to $B_{\red}$, we obtain a natural map $B_{\red} \to \M$. With further work on analytic GIT, we expect that our techniques would give a slightly more natural structure of a complex space on the moduli space, such that the moduli space we construct in Theorem \ref{mainthm} is the induced reduced complex space. This would be important, for example, in proving that $\M$ satisfies a universal property. 

\subsection*{Moduli of K-polystable varieties} A solution to the Yau--Tian--Donadson conjecture, predicting that K-polystability is equivalent to the existence of a cscK metric, would imply that we have constructed a moduli space of smooth K-polystable polarised varieties as a complex space. While the techniques in the present work say little about this algebraic problem (and in particular our construction produces a moduli space as a complex space rather than an algebraic space), we mention that such moduli constructions have been achieved in three important cases. 

The first is in the case that the polarisation (i.e. ample line bundle) $L=K_X$ is the canonical class. Then K-stability of canonically polarised varieties is equivalent to the variety having so-called semi-log canonical singularities through work of Odaka \cite{YO2,YO3}, and the construction of the corresponding moduli space (called the KSBA moduli space) is one of the major successes of the minimal model program \cite{jk-moduli-survey}, generalising the moduli space of curves $\overline{\M_g}$. We note that (even mildly singular) canonically polarised varieties admit K\"ahler--Einstein metrics, which are therefore cscK when the variety is smooth \cite{BG,STY2,TA}. Similarly, Schumacher and Viehweg have constructed a moduli space of smooth polarised Calabi--Yau manifolds \cite{GSchum,EV}, which by Odaka \cite{YO2} are all K-stable and which by Yau's solution of the Calabi conjecture admit K\"ahler--Einstein metrics \cite{STY2}. Thus the classical moduli spaces are all moduli of K-stable varieties, and our Theorem \ref{mainthm} gives another construction of these spaces.

The third case is the Fano case, so that the polarisation $L=-K_X$ is anticanonical. In this case, the equivalence between K-polystability and the existence of a K\"ahler-Einstein metric is the content of Chen--Donaldson--Sun's solution of the Yau--Tian--Donaldson conjecture for Fano manifolds \cite{CDS-jams,GT,RB}. Using many of the deep results of \cite{CDS-jams}, Li--Wang--Xu and Odaka have given a construction of a moduli space of $\Q$-smoothable K-polystable Fano varieties \cite{LWX,YO}, which uses largely different, more algebraic techniques to our approach. It is also worth remarking that there has been very recent progress in constructing this moduli space purely algebro-geometrically, particularly towards the construction of moduli of uniformly K-stable Fano varieties in the sense of \cite{RD2,BHJ}, and we refer to \cite{BL,BX,CZ} for important results in this direction. 

In both the canonically polarised case and the Fano case, we emphasise once more that the moduli spaces constructed using algebraic techniques satisfy significantly stronger conditions than what we obtain analytically, for example they are both known to be \emph{algebraic} spaces which are quasi-projective, and even projective in the KSBA case \cite{LWX2,OF,JK,EV}, while our analytic construction produces a \emph{complex} space. 

\begin{remark} While this work was in progress, we learned of work of Eiji Inoue, who constructs a moduli space of Fano manifolds which admit a K\"ahler--Ricci soliton \cite{EI}. We crucially use an argument of Inoue to understand the automorphism groups of deformations of cscK manifolds, and his ideas are important in our work. The major difference in the basic approach is in how we give the natural topological moduli space the structure of a complex space:  our argument is more direct, constructing and gluing charts explicitly, while Inoue develops an analytic theory of stacks. The main tools are similar (for example, uniqueness of canonical metrics and uniqueness of degenerations of semistable objects), however, and so the approaches are most likely equivalent. We warmly thank Eiji Inoue for many helpful discussions on this and related topics. \end{remark}

\vspace{4mm} \noindent {\bf Outline:} We discuss the aspects of analytic GIT and cscK metrics which we require in Section \ref{sec:prelim}. In Section \ref{section:construction}, we construct the moduli space of cscK manifolds, establishing Theorem \ref{mainthm} and Corollary \ref{intro-degen}. Lastly, the content of  Section \ref{sec-kahler} is our material on the Weil--Petersson metric and the CM line bundle, and in particular this section contains the proof of Theorem \ref{WP}

\vspace{4mm} \noindent {\bf Notation:} Let $X$ be a reduced complex space. We briefly recall the notion of a K\"ahler metric on $X$, following \cite{HHL}. If $h: \Delta \to X$ is a holomorphic map from the unit disc and $\phi$ is a continuous real valued function, we say that $\phi$ satisfies the \emph{mean value condition} if $$\phi \circ h(0) \leq \frac{1}{2\pi}\int^{2\pi}_0 (\phi \circ h)(re^{i\theta})d\theta \textrm{ for } 0<r<1.$$ If this is the case for all such $h$, we say that $\phi$ is \emph{plurisubharmonic}. $\phi$ is in addition said to be \emph{strictly plurisubharmonic} if for every $x \in X$ and smooth $f$ defined in a neighbourhood of $x$, there is an $\epsilon>0$ such that $\phi+\epsilon f$ is plurisubharmonic in a (perhaps smaller) neighbourhood of $x$. Here a function is called \emph{smooth} if in a local chart $U \subset X$ biholomorphic to an open subset of the form $V(I) \subset \C^m$ (with $V(I)$ the vanishing locus of a set of holomorphic functions), it is the restriction of a smooth function on $\C^m$ \cite[p.\ 45]{PL}.

A \emph{K\"ahler metric} on $X$ is a collection of strictly plurisubharmonic functions $(\phi_{\alpha})$ on a covering $(U_{\alpha})$ of $X$ such that the differences $\phi_{\alpha} - \phi_{\beta}$ on $U_{\alpha} \cap U_{\beta}$ are harmonic, in the sense that they are the imaginary part of a holomorphic function on $U_{\alpha} \cap U_{\beta}.$  This data will also be written $\omega$, as is typical in the case of a smooth manifold. 

We will typically have a complex space $X$ which has a natural stratification by locally closed complex manifolds. In this situation on each stratum the $\phi_{\alpha}$ will either be smooth, or may be taken to be of class $C^l$ for some large $l$ (this $l$ will typically arise from some Sobolev embedding). Thus on each stratum $\omega = i\partial\bar\partial\phi_{\alpha}>0$, producing a perhaps mildly singular K\"ahler metric in the usual sense. 

We refer to \cite{ABB,HL,DG} for an introduction to GIT on complex spaces, and to \cite{GS2} for an introduction to cscK metrics. 

\vspace{4mm} \noindent {\bf Acknowledgements:} We thank Daniel Greb, Yue Fan, Eiji Inoue, Yuji Odaka, Cristiano Spotti, G\'abor Sz\'ekelyhidi and David Witt Nystr\"om for helpful discussions, and the referee for several useful suggestions. RD received funding from the ANR grant ``GRACK'' during part of this work. PN received funding from the ERC grant ``ALKAGE''.

\section{Preliminaries}\label{sec:prelim}
\subsection{Analytic geometric invariant theory}  Let $Z$ be a reduced Stein space with an action of a reductive group $G$. We are interested in taking a quotient of $Z$ by $G$; as usual the quotient will only represent \emph{polystable} orbits. A good summary of this theory, namely that of analytic \emph{geometric invariant theory (GIT)}, can be found in \cite[Section 10]{ABB}.

\begin{definition}\cite{GIT} We say that $p \in Z$ is 
\begin{enumerate}[(i)]
\item \emph{polystable} if the orbit $G.p$ is closed,
\item \emph{semistable} otherwise. 
\end{enumerate} 

 \end{definition}

The notion of a quotient we shall use is the following standard variant of the algebraic notion.

\begin{definition}\cite[Definition 1.1]{HL} We say that $Z \to W$ is a \emph{quotient} if
\begin{enumerate}[(i)]
\item for any open $V \subset W$, the inclusion $\O(V) \hookrightarrow \O(\pi^{-1}(U))^G$ is an isomorphism,
\item the polystable orbits are in bijection with the points in $W$.
\end{enumerate}
 \end{definition}
 
The second condition identifies the points of the quotient, while the first gives the complex structure. Once a quotient exists, it is unique up to isomorphism. The following result due to Snow and Heinzner then constructs the quotient, which we call the \emph{GIT quotient}.

\begin{theorem}\cite{DS,PH}  The quotient $Z\git G$ of $Z$ by $G$ exists. \end{theorem}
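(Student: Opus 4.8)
The plan is to construct $W = Z \git G$ directly as a locally ringed space and then verify the two defining properties. As a set, let $W$ be the collection of closed $G$-orbits in $Z$; equivalently, declare $p \sim q$ whenever $\overline{G.p} \cap \overline{G.q} \neq \emptyset$, so that each equivalence class contains a unique closed (polystable) orbit, and set $W = Z/\!\!\sim$ with the quotient topology and projection $\pi \colon Z \to W$. Property (ii) then holds by construction. I would define the structure sheaf by $\O_W(V) := \O_Z(\pi^{-1}(V))^G$, which makes property (i) tautological; the entire content of the theorem is that, with this sheaf, $W$ is genuinely a reduced complex space, i.e. locally isomorphic to a model analytic space.

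The main work is therefore local, near a closed orbit $G.x$, and I would reduce to a linear situation via a slice argument. First, since $G.x$ is closed, its stabiliser $H := G_x$ is reductive (the analytic analogue of Matsushima's criterion). Using Heinzner's equivariant Stein embedding theorem, a $G$-invariant Stein neighbourhood of $G.x$ embeds $G$-equivariantly into a $G$-module, and from this one produces an $H$-invariant locally closed Stein slice $S \ni x$ transverse to the orbit, such that the natural map $G \times_H S \to Z$ is a $G$-equivariant biholomorphism onto a saturated open neighbourhood of $G.x$. This is the analytic Luna slice theorem of Snow, and it yields a canonical identification of local quotients $(G \cdot S)\git G \cong S \git H$, reducing us to the case of the reductive group $H$ fixing the point $x$.

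For this reduced situation I would linearise: as $H$ is reductive and fixes $x$, the slice $S$ embeds $H$-equivariantly as an $H$-invariant Stein subspace of the Zariski tangent space $T_x S$, a finite-dimensional $H$-representation $V$. For the representation $V$ itself the quotient exists and is the analytification of the affine variety with coordinate ring $\C[V]^H$, which is finitely generated by Hilbert's theorem; the quotient of the invariant subspace $S$ is then cut out inside $V \git H$ by the pushed-forward invariant ideal. The key analytic input making these quotients well behaved is the separation property: because $G$ (respectively $H$) is reductive, averaging over a maximal compact subgroup provides a Reynolds operator, and one shows that invariant holomorphic functions separate any two disjoint closed invariant analytic subsets. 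This both guarantees that $\pi$ is locally the categorical quotient and that the fibres of $\pi$ are exactly the $\sim$-classes.

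Finally I would glue. The local quotients constructed above are canonically isomorphic on overlaps by the uniqueness of quotients noted after the definition, so they patch to a global complex space structure on $W$ compatible with the sheaf $\O_W$ of invariants, completing the construction. I expect the genuine obstacle to be the local step, and specifically the slice theorem: establishing the $G$-equivariant biholomorphism $G \times_H S \cong G \cdot S$ in the analytic rather than algebraic category requires care, as does verifying that the resulting model $S \git H$ is a bona fide reduced complex space with the invariant structure sheaf. The set-theoretic and gluing steps, by contrast, are essentially formal once the local theory and the separation property are in hand.
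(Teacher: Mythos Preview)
The paper does not prove this theorem; it is simply cited as a result of Snow \cite{DS} and Heinzner \cite{PH} and used as a black box. There is therefore no proof in the paper to compare your attempt against.

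That said, your outline is a reasonable sketch of the strategy in those references: the reduction via an analytic Luna-type slice to a reductive stabiliser acting near a fixed point, the linearisation into a representation, and the gluing of local quotients are indeed the main structural steps. Two small comments on the sketch itself. First, the assertion that $\sim$ is an equivalence relation and that each class contains a unique closed orbit is not automatic; it already requires the separation property you mention later, so logically that input comes earlier than your presentation suggests. Second, the claim that the analytic quotient of the linear model $V$ by $H$ coincides with the analytification of $\operatorname{Spec}\C[V]^H$ is not a formality: comparing the ring of $H$-invariant holomorphic functions on a Stein space with the algebraic invariants is one of the genuinely analytic ingredients in Heinzner's work, and your sketch treats it as known. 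These are not gaps in the sense of a wrong approach, but they are precisely the places where the cited papers do the real work.
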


\subsection{Constant scalar curvature K\"ahler metrics} We review some aspects of the theory of constant scalar curvature K\"ahler metrics that we shall require. Throughout we let $X$ be a compact complex manifold with a K\"ahler class $\alpha$; we call this data a \emph{polarised manifold}. A \emph{family} of complex manifolds $f: \X\to S$ means a proper smooth morphism of reduced complex spaces with connected fibres. A \emph{polarised family} is a family equipped with a \emph{polarisation}, by which we mean a section $\alpha_{\X} \in \Gamma(S,R^2f_*\R)$ which induces a polarisation $\alpha_{\X_s}$ in the usual sense on all fibres $\X_s$ \cite[Definition 3.1]{FS}.

\begin{definition}The \emph{scalar curvature} of a K\"ahler metric $\omega$ on $X$ is defined to be the contraction of the Ricci curvature $$S(\omega) = \Lambda_{\omega} \Ric\omega,$$ and we say that $\omega$ is a \emph{constant scalar curvature K\"ahler} (cscK) metric if $S(\omega)$ is constant. \end{definition}

The first main link between the geometry of $X$ and the cscK condition is the following, due to Berman--Berndtsson \cite{BB}, Calabi \cite{EC}, Chen \cite{XC}, Donaldson \cite{SD2}, Lichnerowicz \cite{AL} and Matsushima \cite{YM}. 

\begin{theorem}\label{thm:uniqueness} Fix a K\"ahler class  $\alpha$ on $X$, and suppose $\omega \in \alpha$ is a cscK metric. Then:
\begin{enumerate}[(i)]
\item The automorphism group $\Aut(X,\alpha)$ is reductive, and is the complexification of the isometry group of $(X,\omega)$.
\item If $\omega' \in \alpha$ is another cscK metric, then there is a $g \in \Aut(X,\alpha)$ such that $\omega = g^*\omega'$. 
\end{enumerate}
\end{theorem}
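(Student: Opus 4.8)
The plan is to treat the two parts by very different methods: part (i) by a Bochner-type identity, and part (ii) by convexity of an energy functional along geodesics.

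For part (i), I would work with the reduced automorphism group, whose Lie algebra $\mfh$ consists of holomorphic vector fields admitting a zero, and identify these with complex-valued \emph{holomorphy potentials} via $v = \nabla^{1,0}f$, the gradient being taken with respect to $\omega$. Such a $v$ is holomorphic precisely when $f$ lies in the kernel of the Lichnerowicz operator $\D f = \db \nabla^{1,0}f$. The key input, due to Lichnerowicz, is a Weitzenb\"ock identity expressing the fourth-order self-adjoint operator $\D^*\D$ as a real operator (a sum of $\Delta^2$ and a Ricci contraction, both of which commute with complex conjugation) plus a single first-order term of the form $\langle \nabla S(\omega), \nabla f\rangle$. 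When $\omega$ is cscK this last term vanishes, so $\D^*\D$ commutes with conjugation; hence its kernel, the space of holomorphy potentials, is stable under $f \mapsto \bar f$ and splits as the complexification of its real part. Translating back, the real holomorphy potentials correspond to Killing fields of $\omega$ together with their $J$-images, yielding the decomposition $\mfh = \mfk \oplus J\mfk$ with $\mfk$ the Lie algebra of the isometry group. This exhibits $\mfh$ as the complexification of the compact Lie algebra $\mfk$, whence reductivity; incorporating the torus coming from holomorphic vector fields without zeros and integrating then identifies the identity component of $\Aut(X,\alpha)$ with the complexification of the identity component of $\mathrm{Isom}(X,\omega)$, using that the isometry group of a compact Riemannian manifold is a compact Lie group.

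For part (ii), I would use the variational characterisation of cscK metrics as critical points of the Mabuchi K-energy $\M$ on the space $\H$ of K\"ahler potentials in $\alpha$, together with its convexity. Given two cscK metrics $\omega_0,\omega_1 \in \alpha$, I would connect them by a weak geodesic $\phi_t$ in $\H$, obtained by solving the associated homogeneous complex Monge--Amp\`ere equation on the product of $X$ with an annulus; by Chen's theorem such geodesics exist with $C^{1,1}$ regularity. The heart of the matter is that $\M$ is convex along these weak geodesics, a result of Berman--Berndtsson. Since cscK metrics are critical points of a convex functional on a geodesically connected space, they are global minimisers, so $\M$ attains the same minimal value at the two endpoints and is therefore affine along $\phi_t$.

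The hard part, and the main obstacle, is the rigidity statement: affineness of $\M$ along the geodesic must force the geodesic to be generated by a holomorphic vector field. This is the strict convexity modulo automorphisms of Berman--Berndtsson, whose proof requires delicate pluripotential theory both to control the degenerate regularity of weak geodesics and to analyse the equality case in the convexity estimate. Granting it, the affine geodesic $\phi_t$ integrates to a one-parameter subgroup $g_t \in \Aut(X,\alpha)$ pulling $\omega_1$ back to $\omega_{\phi_t}$, and evaluating at the initial endpoint produces $g \in \Aut(X,\alpha)$ with $\omega_0 = g^*\omega_1$, as required. I expect the regularity theory underpinning both the existence of geodesics and the equality analysis to be by far the most technical ingredient; part (i) is comparatively elementary once the Lichnerowicz identity is in hand.
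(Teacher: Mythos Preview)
The paper does not supply its own proof of this theorem; it is quoted as a preliminary result from the literature, with attribution to Lichnerowicz and Matsushima for part (i) and to Donaldson, Chen, and Berman--Berndtsson for part (ii). Your outline is exactly the argument found in those references, so there is nothing to compare against.

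One small imprecision: you speak of ``incorporating the torus coming from holomorphic vector fields without zeros'', but in this paper $\Aut(X,\alpha)$ is \emph{defined} (immediately after the theorem statement) so that its Lie algebra consists of holomorphic vector fields which vanish somewhere. Hence the holomorphy-potential description already captures the entire Lie algebra $\mfg$, and no additional torus needs to be adjoined. Apart from this, your sketch is accurate and matches the cited sources.
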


Here the automorphism group $\Aut(X,\alpha)$ is characterised as a subgroup of $\Aut(X)$ by its Lie algebra, which consists of vector fields which vanish somewhere \cite{LS}. We will denote $G_X = \Aut(X,\alpha)$, which is reductive, and is the complexification of the isometry group of $(X,\omega)$, which we denote $H_X\subset G_X$. In addition we denote by $\mfh_X$ the Lie algebra of $H_X$. In the projective case, by which we mean the case that $\alpha = c_1(L)$ for an ample line bundle $L$, the automorphism group consists of automorphisms of $X$ which lift to $L$.

We will also require the following.  To state the result, let us say that a polarised manifold $(Y,\alpha_Y)$ admits a \emph{degeneration} to $(X,\alpha_X)$ if there is a polarised family $(\X,\alpha_{\X}) \to \C$ with general fibres all isomorphic to $(Y,\alpha_Y)$ and central fibre isomorphic to $(X,\alpha_X)$.

\begin{theorem}[Chen--Sun]\cite[Theorem 1.6]{CS}\label{thm:CS} If $(Y,\alpha_Y)$ admits a degeneration to two cscK manifolds $(X,\alpha_X)$ and $(Z,\alpha_Z)$, then $(X,\alpha_X) \cong (Z,\alpha_Z)$.
\end{theorem}

The Chen--Sun  statement requires that the K\"ahler class be integral; the assumption is only used so that the class $\alpha_{\X_s} \in H^2(\X_s,\R)$ can be taken to be independent of $s$ after fixing the underlying smooth structure of the families \cite[Proof of Theorem 1.6]{CS}. This is automatic for polarised families of complex manifolds  (see for example \cite[Theorem 8.2]{AF}), giving the above, as raised by Chen--Sun \cite[Remark (4), Section 9]{CS}.

\subsection{Deformation theory}\label{sec:deformations} We recall the deformation theory of cscK manifolds, established by  Ortu, Br\"onnle and Sz\'ekelyhidi  \cite[Appendix A]{ortu}\cite{TB,GS}; our discussion follows Ortu's work. The results can be viewed as providing an analogue of Luna's slice theorem in GIT. We begin with some notation. We fix a compact symplectic manifold $(M,\omega)$, and let $\J$ denote the space of almost complex structures on $M$ which are compatible with $\omega$. The space $\J$ admits an action of the group $\H$ of exact symplectomorphisms of $(M,\omega)$.  We will also be interested in the subset $\J^{\inte}\subset \J$ consisting, by definition, of integrable almost complex structures, which is preserved by $\H$, and where  the $\H$-stabiliser of any complex structure is simply the group of Hamiltonian isometries of the K\"ahler metric (induced by) $\omega$ with respect to the complex structure \cite[Section 6.1]{GS2}.

We assume that we have a polarised manifold $(X,\alpha)$ inducing a $J_0 \in \J$ with respect to which $\omega$ is a cscK metric. To understands its deformations, we consider as in Sz\'ekelyhidi \cite{GS} the elliptic complex $$C^{\infty}_0(X) \overset{P}{\to}T_{J_0}\J\overset{\db}{\to}\Omega^{0,2}(T^{1,0}).$$ Here $T_{J_0}\J$ consists of the $\beta \in \Omega^{0,1}(T^{1,0})$ satisfying $$\omega(\beta(v),w) + \omega(v,\beta(w))=0.$$ The space $C^{\infty}_0(X)$ denotes the functions with integral zero over $X$ and the operator $P: C^{\infty}_0(X){\to}T_{J_0}\J$ is defined by $P(h) = \db v_h$, where $v_h$ is the Hamiltonian vector field associated with $h$. We denote by $$\tilde H^1 = \{\beta \in T_{J_0}\J\ |\ P^*\beta = \db \beta = 0 \},$$ which is the kernel of the elliptic operator $P^*P+ \db^*\db$ and hence finite dimensional. 

The group $H_X$ of isometries of $(X,\omega)$ acts naturally on $\Omega^{0,1}(T^{1,0})$ by pulling back sections, and this induces a linear action of $H_X$ on  $\tilde H^1 \subset \Omega^{0,1}(T^{1,0})$. As $G_X$ is the complexification of $H_X$, this induces a linear action of $G_X$ on $\tilde H^1$.

A version of Kuranishi's theorem, explained by Sz\'ekelyhidi \cite[Proposition 7]{GS}, produces a $K$-equivariant holomorphic map $$\Psi: \hat B \to \J$$ from a ball $\hat B \subset \tilde H^1$ around the origin, mapping $0\in \hat B$ to $J_0$. The space $\hat B$ admits a universal family $\pi: (\X,\alpha_{\X}) \to \hat B$, such that $\X$ is an almost complex manifold with almost complex structure $J_{\X}$ and $\pi$ is a holomorphic submersion between almost complex manifolds. In fact, smoothly $\X = \hat B \times M$ and the almost complex structure on $\X$ is constructed such that a fibre $\X_p = \pi^{-1}(p)$ is given by $(M,\Psi(p))$,  namely $M$ endowed with the almost complex structure $\Psi(p)$. Throughout we will frequently shrink $\hat B$, by which we simply mean taking a smaller neighbourhood of the origin.

The $H_X$-action on $\hat B$ lifts in addition to one on $\X$ making $\pi$ a $H_X$-equivariant morphism, and $\X$ admits a relatively almost K\"ahler metric (i.e. almost K\"ahler on each fibre) $ \tilde \omega_{\X}$ induced by $\omega$. More precisely,  $\omega$ is pulled back from $M$ through the smooth identification $\X = \hat B \times M$ to produce what we denote $\tilde \omega_{\X}$. 

We will mostly be interested in the restriction of the universal family to the integrable locus  which we denote $$B\subset \hat B,$$ and which may be singular. We note the integrable locus is preserved by the $G_X$-action (in the sense that  if $p\in B$ and $g.p\in \hat B$, then $g.p \in B$). We remark that since we work in a neighbourhood of the origin in $\tilde H^1$, we do not have a genuine $G_X$-action on $B$, most formally the ``local'' $G_X$-action gives a pseudogroup structure to $B$ in the sense of Cartan. As $B$ may not be reduced, we replace it with its reduction to later apply Stein GIT. 

On the integrable locus $B$, the Kuranishi family $\X\to B$ is a \emph{complete} deformation space: given any $p\in B$ and any family $(\Y,\alpha_Y)\to B_{\Y}$ with $b\in B_{\Y}$ such that $(\Y_b,\alpha_{\Y_b}) \cong (\X_p,\alpha_p)$, there is a morphism $\sigma$ from a neighbourhood of $b$ in $B_{\Y}$ to $B$ such that $\sigma(p)=b$ and such that $\sigma$ pulls back $(\X,\alpha_{\X})$ to $(\Y,\alpha_{\Y})$. We also note that if $q\in G_X.p$, then from the constructions $(\X_p, \alpha_{\X_p}) \cong(\X_q, \alpha_{\X_q})$.

We will be interested in modifying the initial relative almost K\"ahler form $\tilde \omega_{\X}$ on $\X$. Recalling that smoothly $\X = \hat B \times M$, we may modify the structure by changing $\omega_{\X}$ to $$\omega_{\X} =  \tilde \omega_{\X} +dJ_{\X}d\phi \in \alpha_{\X},$$ for $\phi$ a function on $\X =\hat B_{\epsilon}\times M$, which we think of a family of functions on $M$ parametrised by $p\in \hat B$. On the integrable locus, the resulting form  $ \omega_{\X} $ is a closed, $J_{\X}$-invariant $2$-form on $X$ which, provided $\phi$ is sufficiently small (or after shrinking the base) is relatively K\"ahler (note that as $J_{\X}$ is not integrable in general, $\omega_{\X}$ is not necessarily $J_{\X}$-compatible). The function $\phi$ is constructed on all of $\X$, but we will only be interested in its geometric properties on the integrable locus.  We set $$\omega_{\X_p} = (\omega_{\X} +dJ_{\X}d\phi)|_{\X_p},$$ recalling $\X_p \cong M$ smoothly. Fixing a reference metric on $M$ defines a Sobolev space $L^2_{k+2}(M)$ of functions on $M$, and we will be interested in $\phi$ with $\phi|_{\X_p}\in L^2_{k+2}(M)$ varying smoothly with $p\in \hat B$.

\begin{theorem}\label{ortu}\cite[Theorem A.1]{ortu} Perhaps after shrinking $\hat B$, there exists a function $\phi$ on $\X$ inducing $$\omega_{\X} =  \tilde \omega_{\X} +dJ_{\X}d$$ such that $\phi|_{\X_p} \in L^2_{k+2}(M)$ for all $p\in \hat B_{\epsilon}$, varying smoothly with $p$, and an open set $\hat C \subset \hat B$ such that the following holds for $p \in C$ in the integrable locus $C\subset \hat C$:
\begin{enumerate}[(i)]
\item if the $G_X$-orbit of $p$ is closed in $ B$, then there is a point $q\in G.p \cap B$ such that the K\"ahler manifold $(\X_q,\omega_{\X_q})$ is cscK;
\item if the $G_X$-orbit of $p$ is not closed in $ B$, then the closure of the $G_X$-orbit of $p$ contains a point $q \in B$ with $(\X_q,\omega_{\X_q})$ cscK;
\item if the manifold $(X_p,\alpha_{\X_p})$ associated to $p$ admits a cscK metric and the $G_X$-orbit of $p$ is not closed in $ C$, then the manifold associated to $q$ described in $(ii)$ is isomorphic to $(\X_p,\alpha_{\X_p})$.
\end{enumerate} 
\end{theorem}

A main point of Ortu's approach to proving this is to change the K\"ahler structure on $\X$, as opposed to changing the almost complex structure and hence the map $\Psi$. It will be more useful in our subsequent results, however, to change instead the almost complex structure. Moser's trick explains the relationship between the two approaches, as described for example by Sz\'ekelyhidi \cite[Section 3]{GS}; we recall the construction.

Fixing $p\in \hat B$, we write $$\omega_{p,t} = \omega + tdJ_{\X_p}d\phi|_{\X_p},$$ so that $\omega_{t,p}$ interpolates between $\omega$ and $\omega_{\X_p}$. Writing  $$\beta_p = J_{\X_p}d(\phi|_{\X_p}),$$ by definition $\omega_{p,t} = \omega + t\beta_p$ and $\beta_p$ is a one-form with coefficients in $L^2_{k+1}(M)$.  For each $t\in [0,1]$ we obtain a vector field $u_{p,t}$, with coefficients in $L^2_{k}$, as the $\omega_{p,t}$-dual of $-\beta_p$, so that $$\frac{d}{dt}\omega_{t,p} = \beta_p = -\iota_{u_{p,t}} \omega_{t,p} = \L_{u_{p,t}}\omega_{p,t}.$$ Taking the time-one flow of $u_{1,p}$ gives an $L^2_{k}$-diffeomorphism $f_p$ which satisfies $$f_p^*\omega_p = \omega.$$ In particular $f_p$ induces an isometry $$f_p: (M,J_{\X_p},\omega_{\X_p}) \cong (M,f_p^*J_{\X_p},\omega),$$ where $f_p^*J_{\X_p}$ is an almost complex structure with $L^2_{k}$-coefficients. 

Define the space $\J^2_k$ to be the space of $L^2_k$-almost complex structures on $(M,\omega)$, which admits an action of $\H^2_{k+1}$, the Sobolev completion of $\H$. Performing this process for all $p\in \hat B$, we obtain for each $p\in \hat B$ a new complex structure $f_p^*J_{\X_p} \in \J^2_k$, so we may view the function $\phi$ on $\X$ defining the change in relative almost K\"ahler metric as inducing a smooth map which we write \begin{equation}\label{eq:perturbedphi}\Phi: \hat B \to  \J^2_k,\end{equation} which is a perturbation of the initial map $\Psi$.  After perhaps shrinking $B$, we may assuming $\Phi$ is an embedding, as the initial map $\Psi$ is an embedding. Giving both sides their natural $H_X$-actions (viewing $H_X\subset \H^2_{k+1}$), the constructions imply that $\Phi$ is further a $K$-equivariant map. For each $p\in B$, if $(M,J_{\X_p},\omega_{\X_p})$ is cscK, then since $$f_p: (M,J_{\X_p},\omega_{\X_p}) \cong (M,f_p^*J_{\X_p},\omega)$$ is an isometry, it remains true that $ (M,f_p^*J_{\X_p},\omega)$ is a cscK manifold. Note that by elliptic regularity, such an almost complex structure is in fact smooth (or, equivalently, the original $\phi|_{\X_p}$ must be smooth). 

\begin{remark}\label{rmk:completeness}
We will later use the following property of the Kuranishi space, which is a form of completeness: there is an open neighbourhood $U$ of $\Phi(0)$ in $\J^{2,\inte}_k$ such that if $J'\in U$, then there is a $p\in B$ such that $(\X_p,\alpha_{\X_p}) \cong (M,J',[\omega])$.
\end{remark}

We will also require a more detailed understanding of Ortu's construction, for which we restrict to the integrable locus $\X \to B$. The relatively K\"ahler metric $\omega_{\X}$ induces a Hermitian metric on the relative tangent bundle $T_{\X/B}$, which in turn induces a metric on the relative anti-canonical class $-K_{\X/B}$ (when $B$ is singular, one may perform a similar process first on $\hat B$ and then restrict). Let $\rho \in c_1(-K_{\X/B})$ be the curvature of this Hermitian metric, so that $\rho$ restricts to the Ricci curvature of $\omega_b$ over each fibre $\X_b$ (which again may be constructed over $\hat B$ and restricted). 

Set $$\hat S = n\frac{\int_{\X_p} \Ric \omega_{\X_p} \wedge \omega_{\X_p}^{n-1}}{\int_{\X_p} \omega_{\X_p}^n} $$ to be the fibrewise average scalar curvature, calculated on any fibre of $\X\to B$, where $n$ is the dimension of  $\X_p$. This is independent of relatively K\"ahler $\omega_{\X} \in \alpha_{\X}$, as the average scalar curvature is independent of the fibre. Indeed, the function sending $p\in B$ to $\int_{\X_p} \Ric \omega_{\X_p} \wedge \omega_{\X_p}^{n-1}$ is the pushforward (or fibre integral) of the closed $(n,n)$-form $\rho\wedge\omega_{\X}$ over $\X\to B$, so is a closed $0$-form on the base $B$, but closed forms on $B$ are simply constants. 

Define a closed $(1,1)$-form by the fibre integral (or direct image) \begin{equation}\label{initial-WP-formula} \Omega_X =  \int_{\X/B}\left(\frac{\hat S}{n+1} \omega_{\X}^{n+1} - \rho \wedge \omega_{\X}^n \right).\end{equation} This is then a $H_X$-invariant K\"ahler metric on $B$ (perhaps after shrinking $B$), and Ortu's construction involves a moment map  \begin{equation}\label{eqn:momentmap}\mu_X: B\to \mfh_X^*\end{equation} for $\mfh_X$ the Lie algebra of $H_X$, constructed through \cite[Theorem 6.4]{DH}, such that $\mu_X(p)$ vanishes if and only if $(\X_p,\omega_{\X_p})$ is cscK. We note that $\Omega_X$ is defined only on the integrable locus $B$, but Ortu shows that both $\Omega_X$ and the moment map extend to $\hat B$.

\section{Construction of the moduli space}\label{section:construction}

The construction of the moduli space consists of two main steps. The first is to construct a local moduli space around a fixed K\"ahler manifold. The proof of this involves strengthening aspects of the deformation theory of cscK manifolds, and using Stein GIT to construct a local quotient. The second step is to glue together the local GIT quotients, for which the main tool is the work of Chen--Sun \cite{CS}.

\subsection{Stabiliser preservation} 

A key step in the construction of the moduli space is to give an appropriate understanding of the stabilisers of the points in the Kuranishi space. It will only be necessary for us to consider the polystable points, which have reductive stabiliser. From the deformation theorem of Theorem \ref{ortu}, we obtain ball $\hat B\subset \tilde H^1$ of interest. Here we identify the stabilisers of certain points in an even smaller ball by using the work of Inoue \cite{EI}. Results such as these are often called ``stabiliser preservation'' results in the stacks literature.

We consider the Kuranishi space around our initial $(X,\omega)$ and follow the notation of Section \ref{sec:deformations}, so that we are primarily interested in the map $$\Phi: \hat B \to \J^2_k$$ of Equation \eqref{eq:perturbedphi}. The finite dimensional compact group $H_X$ acts freely on $\hat B \times\H^2_{k+1} $, hence the quotient
$$ (\hat B \times \H^2_{k+1} )/H_X
$$ 
is a Banach manifold.  
\begin{proposition}\label{EI-injectivity}\cite[Proposition 3.11]{EI}
The map
$$
F: \hat (\hat B \times \H^2_{k+1} )/H_X \to \J^2_k: (p,h) \mapsto h^*\Phi(p)  
$$
is injective for a sufficiently small neighbourhood of the origin in $\hat B\subset \tilde H^1$. 
\end{proposition}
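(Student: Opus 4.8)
The plan is to identify injectivity of $F$ with the defining property of a Luna-type slice: the only exact symplectomorphisms carrying one point of the slice $\Phi(\hat B_\gamma)$ to another are Hamiltonian isometries. Recall that $K$ acts by $u\cdot(p,h) = (u\cdot p, u^{-1}h)$, which is exactly the relation making $F(p,h)=h^*\Phi(p)$ descend to the associated bundle. Suppose $F([p_1,h_1]) = F([p_2,h_2])$. Applying the target diffeomorphism $\rho_{h_1^{-1}}\colon J \mapsto (h_1^{-1})^* J$ of $\J^2_k$ and using $F(p,hg) = g^*F(p,h)$, we may normalise $h_1 = e$, so the hypothesis becomes $\Phi(p_1) = g^*\Phi(p_2)$ with $g = h_2 h_1^{-1}$. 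Since $\Phi$ is an embedding and $K$-equivariant, once we know $g \in K$ we immediately recover $p_1 = g\cdot p_2$ and hence $[p_1,h_1] = [p_2,h_2]$. Thus everything reduces to showing $g \in K$.

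First I would establish that $F$ is an immersion. At the base point the derivative is
$$ dF_{[0,e]}(\dot p, \dot h) = d\Phi_0(\dot p) + P(\dot h),$$
where $d\Phi_0$ is the inclusion of $\tilde H^1$ and $P(\dot h) = \db v_{\dot h}$ is the infinitesimal action. If this vanishes, applying $P^*$ kills the first term (as $\tilde H^1 \subset \ker P^*$) and gives $\dot h \in \ker(P^*P) = \ker P = \mfk$, which is zero in $C^{\infty}_0(X)/\mfk$; then $d\Phi_0(\dot p) = 0$ forces $\dot p = 0$. The elliptic decomposition associated to the complex $C^{\infty}_0(X) \overset{P}{\to} T_{J_0}\J \overset{\db}{\to} \Omega^{0,2}(T^{1,0})$ shows the image $\tilde H^1 \oplus \Image P$ is closed and complemented, so $dF_{[0,e]}$ is a split injection and $F$ is an immersion at $[0,e]$. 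The equivariance $F\circ R_g = \rho_g \circ F$, with $R_g$ right translation (a diffeomorphism of the source) and $\rho_g$ a diffeomorphism of $\J^2_k$, propagates this to every point $[0,h]$ of the central fibre; hence $F$ is locally injective near each such point, and in particular near $[0,e]$.

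It remains to upgrade this local injectivity, valid near the central fibre, to global injectivity over a fixed small $\hat B_\gamma$. I would argue by contradiction: if no such $\gamma$ worked, there would be sequences $[p_n^{(1)}, e] \ne [p_n^{(2)}, g_n]$ with $p_n^{(i)} \to 0$ and $\Phi(p_n^{(1)}) = g_n^*\Phi(p_n^{(2)})$. Since both $\Phi(p_n^{(i)}) \to J_0$, the $g_n$ asymptotically stabilise $J_0$. The crucial input is then a properness statement for the $\H^2_{k+1}$-action on $\J^2_k$ at $J_0$: because the stabiliser of $J_0$ is precisely the compact group of Hamiltonian isometries inside $K$, one can extract a subsequence with $g_n \to g_\infty \in K$. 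As $0$ is a $K$-fixed point and $g_\infty \in K$, one computes $[0, g_\infty] = [0,e]$, so both sequences converge to $[0,e]$ in the associated bundle while remaining distinct with equal $F$-image, contradicting the local injectivity of $F$ at $[0,e]$ established above. This simultaneously yields a uniform $\hat B_\gamma$.

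The main obstacle is the compactness step. Extracting $g_\infty$ requires knowing that the action of the exact symplectomorphism group on almost complex structures is proper near $J_0$, and that a sequence with $g_n^*\Phi(p_n^{(2)}) \to J_0$ subconverges to an honest isometry. Two points need care: the $g_n$ only lie in the Sobolev completion $\H^2_{k+1}$, so one must use elliptic regularity, bootstrapping the equation $g_n^*\Phi(p_n^{(2)}) = \Phi(p_n^{(1)})$ with both sides converging to the smooth $J_0$, to ensure the limit $g_\infty$ is a genuine Hamiltonian isometry lying in $K$; and one must check that the stabiliser being exactly $K$ is what forces $g_\infty \in K$. This is precisely the content of Inoue's argument and is the only genuinely nontrivial ingredient, the infinitesimal and equivariance steps being formal consequences of the elliptic complex and the $K$-equivariance of $\Phi$.
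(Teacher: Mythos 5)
Your proposal is essentially the argument the paper is pointing to: the paper itself gives no proof of this proposition, deferring entirely to Inoue \cite[Proposition 3.11]{EI} and recording only that ``the proof uses the implicit function theorem'' and that the nontrivial feature is injectivity for \emph{all} of $\H^2_{k+1}$. Your decomposition matches that description exactly: the implicit function theorem enters through your immersion computation $dF_{[0,e]}(\dot p,\dot h)=d\Phi_0(\dot p)+P(\dot h)$, where the elliptic splitting $T_{J_0}\J = \Image P\oplus\ker P^*$ and $\ker P=\mfk$ give a split injection on $\tilde H^1\oplus(C^\infty_0(X)/\mfk)$, and the emphasised global-in-$\H^2_{k+1}$ statement is exactly your properness step. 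Two caveats. First, the compactness step is asserted rather than proved; it is correct in substance (from $h^*\Phi(p)=\Phi(p')$ and $h^*\omega=\omega$ one reads off that $h$ is an isometry between the metrics $\omega(\cdot,\Phi(p')\cdot)$ and $\omega(\cdot,\Phi(p)\cdot)$, both close to the cscK metric, and isometries between uniformly comparable metrics form a compact family whose limits stabilise $J_0$ and hence lie in $K$), but since you correctly identify this as the only nontrivial ingredient and sketch the elliptic bootstrapping needed to land in $\H^2_{k+1}$ and then in $K$, this is an acceptable level of detail for a result the paper itself imports. Second, a technical point you should not gloss over if you write this out in full: the action map $h\mapsto h^*J$ of $\H^2_{k+1}$ on $\J^2_k$ is not smooth at a merely $L^2_k$ almost complex structure, so the $C^1$ hypothesis of the implicit function theorem at points $[p,h]$ with $p\neq 0$ requires care (at $[0,e]$ you are fine because $J_0$ is smooth); this is handled in \cite{EI} and is part of why the argument is organised around the central fibre. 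Finally, a minor convention slip: for $F(p,h)=h^*\Phi(p)$ to descend with $\Phi$ satisfying $\Phi(u\cdot p)=(u^{-1})^*\Phi(p)$, the identification should be $(p,h)\sim(u\cdot p, uh)$ rather than $(u\cdot p,u^{-1}h)$; this does not affect the structure of your argument.
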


The proof uses the implicit function theorem.  We emphasise that the injectivity occurs for a small neighbourhood of the origin in $\tilde H^1$, but for \emph{all} of $\H^2_{k+1}$. We replace the ball $\hat B$ by the resulting ball, and similarly shrink $B$, so that on $B$ the map remains injective.

The following is proven in essentially the same way as in the work of Inoue \cite[Corollary 3.14]{EI}.

\begin{corollary}\label{cor:stb} 
For any $p \in B$ such that $(X_p,\alpha_p)$ admits a cscK metric, the stabiliser $\H_{\Phi(p)}$ of $\Phi(p)$ in $\H^2_k$ is the stabiliser group $(H_{X})_p\subset H_X$.\end{corollary}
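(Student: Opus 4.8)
The plan is to establish the two inclusions $K_p \subseteq \H_{\Phi(p)}$ and $\H_{\Phi(p)} \subseteq K_p$ separately. The first is formal and uses only the $K$-equivariance of the slice embedding $\Phi$, while the second is where the injectivity of $F$ from Proposition \ref{EI-injectivity} carries the argument. Conceptually the corollary asserts exactly that the \emph{a priori} large $\H$-stabiliser of $\Phi(p)$ is in fact contained in the compact group $K$ and fixes $p$.

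For $K_p \subseteq \H_{\Phi(p)}$ I would argue directly from equivariance. Recall $K \subset \H \subset \H^2_k$ consists of the Hamiltonian isometries of $(X,J_0,\omega)$, and $K_p$ is the $K$-stabiliser of $p$. If $k \in K_p$ then $k \cdot p = p$, so equivariance gives $\Phi(p) = \Phi(k\cdot p) = k\cdot \Phi(p)$, whence $k$ stabilises $\Phi(p)$, i.e.\ $k \in \H_{\Phi(p)}$. This half needs no hypothesis on $p$, and in particular shows $K_p = K \cap \H_{\Phi(p)}$, so that the content of the corollary is precisely $\H_{\Phi(p)} \subseteq K$.

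For the reverse inclusion, take $h \in \H_{\Phi(p)}$, so $h^*\Phi(p) = \Phi(p)$. Then $F(p,\id) = \Phi(p) = h^*\Phi(p) = F(p,h)$, so $(p,h)$ and $(p,\id)$ have the same image under $F$. Assuming for the moment that $h \in \H^2_{k+1}$, the injectivity of $F$ on $B_{\gamma}\times^K \H^2_{k+1}$ (Proposition \ref{EI-injectivity}, applied to the complex subspace $B_{\gamma}\subset\hat B_{\gamma}$) forces $(p,h)$ and $(p,\id)$ into the same $K$-orbit. Unwinding the definition of the balanced product, for which $K$ acts by the slice action on the first factor and by translation on the second (the action rendering $F$ invariant), this produces a $k\in K$ with $k\cdot p = p$ and $h = k$; hence $h \in K_p$, as desired.

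The one genuine obstacle, and the only place the cscK hypothesis on $(X_p,\alpha_p)$ enters, is the regularity upgrade $h\in\H^2_k \Rightarrow h\in\H^2_{k+1}$ needed before $F$ can be applied. An element of $\H_{\Phi(p)}$ is a Hamiltonian isometry of the Kähler metric induced by $(\omega,\Phi(p))$; since $(X_p,\alpha_p)$ admits a cscK metric, the regularity theory recalled in Remark \ref{singular-diffeo} lets us regard these as isometries of a smooth cscK manifold, and a standard elliptic bootstrap for the overdetermined-elliptic Killing equation then shows any such isometry is smooth, so in particular lies in $\H^2_{k+1}$. I expect this elliptic regularity point, rather than the now essentially formal GIT core, to be the delicate step, and it is here that we follow Inoue's argument \cite[Corollary 3.14]{EI}.
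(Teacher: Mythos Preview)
Your proof is correct and follows essentially the same route as the paper's: obtain $K_p \subset \H_{\Phi(p)}$ from $K$-equivariance of $\Phi$, then for $h \in \H_{\Phi(p)}$ compute $F(p,h)=F(p,\id)$ and invoke the injectivity of $F$ from Proposition~\ref{EI-injectivity} to force $h \in K_p$. You are in fact more careful than the paper's two-line proof on one point: you explicitly flag the Sobolev mismatch (the stabiliser is \emph{a priori} in $\H^2_k$ while $F$ is defined on $\H^2_{k+1}$) and indicate how elliptic regularity, under the cscK hypothesis, resolves it; the paper's proof of this corollary does not address this, though the identical regularity step appears later in the proof of Proposition~\ref{orbits}.
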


\begin{proof}
We pick a $p \in B \subset \hat B$ such that $\Phi(p)$ is cscK. Since the embedding $\Phi: \hat B_{\gamma} \to \J^2_k$ is $H_X$-equivariant, we have $(H_{X})_p \subset \H_{\Phi(p)}$. Now for $h \in \H_{\Phi(p)}$ we have 
$$F(p,h)=h^*\Phi(p)=\Phi(p)=F(p,\id),$$ and by the injectivity of $F$ we conclude that $$(p,h)=(p,\id),$$ thus $h \in H_X \cap \H_{\Phi(p)}=(H_{X})_p$ because $\Phi$ is also injective.\end{proof}

Thus we may, and do, assume that in $B$, the stabiliser preserving property proven above holds.

\subsection{Orbit structure}

The $G_X$-orbits of $B$ under the automorphism group action on the Kuranishi space parametrise isomorphic polarised manifolds. Here we prove a converse for polystable orbits, using the uniqueness property of cscK metrics and the stabiliser preservation, inspired by the work of Inoue \cite[Corollary 3.12, Proposition 4.10]{EI}.

\begin{proposition}\label{orbits} Consider a polystable point $p \in C$ with corresponding manifold $(\X_p,\alpha_{\X_p})$. If $q$ is polystable and the manifold $(\X_q,\alpha_{\X_q})$ corresponding to $q \in C$ is isomorphic to $(\X_p,\alpha_{\X_p})$, then $q \in G_X.p$.
\end{proposition}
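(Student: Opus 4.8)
The plan is to manufacture, from the abstract isomorphism $(X_p,\alpha_p)\cong(X_q,\alpha_q)$, an exact symplectomorphism relating $\Phi(p)$ and $\Phi(q)$, and then feed this into the injectivity of the map $F$ from Proposition \ref{EI-injectivity}. First I would reduce to the case that $\Phi(p)$ and $\Phi(q)$ are zeros of the moment map $\mu$. Since $p$ is polystable its $G$-orbit is closed, so by Theorem \ref{TBGS}(i) that orbit contains a zero of $\mu$, and likewise for $q$; as replacing $p,q$ by other points of their $G$-orbits does not change the orbits $G.p,G.q$, I may assume that $\omega$ is a cscK metric with respect to both $\Phi(p)$ and $\Phi(q)$. (Here one shrinks $\delta$ so that, as recorded after Corollary \ref{cor:stb}, the injectivity of $F$ and the stabiliser preservation hold throughout $B_\delta\subset\hat B_\gamma$.) By Remark \ref{singular-diffeo} the natural $L^2_k$-diffeomorphisms $M\to\X_p$ and $M\to\X_q$ pull back the complex structures of $\X_p,\X_q$ to $\Phi(p),\Phi(q)$; composing them with a fixed isomorphism $(X_p,\alpha_p)\cong(X_q,\alpha_q)$ yields an $L^2_k$ biholomorphism $\rho_0\colon (M,\Phi(p))\to(M,\Phi(q))$ with $\rho_0^*\Phi(q)=\Phi(p)$, and since the isomorphism preserves polarisations we have $\rho_0^*[\omega]=[\omega]=\alpha$ in $H^2(M,\R)$.

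Next I would use uniqueness of cscK metrics to upgrade $\rho_0$ to an isometry. The metrics $\omega$ and $\rho_0^*\omega$ are both cscK on $(M,\Phi(p))$ and lie in the class $\alpha$, so by Theorem \ref{thm:uniqueness}(ii) there is $\sigma\in\Aut(M,\Phi(p),\alpha)$ with $\omega=\sigma^*\rho_0^*\omega=(\rho_0\sigma)^*\omega$. Setting $\rho:=\rho_0\sigma$ gives a biholomorphism $(M,\Phi(p))\to(M,\Phi(q))$ which is moreover an isometry of $(M,\omega)$, in particular a symplectomorphism, and still satisfies $\rho^*\Phi(q)=\Phi(p)$, that is $(\rho^{-1})^*\Phi(p)=\Phi(q)$.

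Granting that $\rho^{-1}$ lies in the group $\H^2_{k+1}$ of exact symplectomorphisms, the conclusion follows from injectivity: we have $F(q,\id)=\Phi(q)=(\rho^{-1})^*\Phi(p)=F(p,\rho^{-1})$, so Proposition \ref{EI-injectivity} forces $(q,\id)=(p,\rho^{-1})$ in $\hat B_\gamma\times^K\H^2_{k+1}$. Unwinding the $K$-equivalence gives $q\in K.p$ (and $\rho^{-1}\in K$); undoing the initial reduction to moment-map zeros then yields $q\in G.p$ for the original points, since $q_0\in K.p_0\subset G.p$ and $q\in G.q_0=G.p$. That the two zeros end up in a single $K$-orbit rather than merely a single $G$-orbit is exactly the Kempf--Ness principle, so this outcome is consistent.

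The main obstacle is precisely the step granted above: showing that the isometry $\rho$ may be taken in $\H^2_{k+1}$, rather than in another component of the symplectomorphism group or with nonzero flux. The difficulty is that the isomorphism $(X_p,\alpha_p)\cong(X_q,\alpha_q)$ is a priori arbitrary, so the induced diffeomorphism of $M$ need not be isotopic to the identity. I would handle this by exploiting that $\Phi(p)$ and $\Phi(q)$ are small deformations of the fixed cscK structure $J_0$, and that $K\subset\H$ consists of the Hamiltonian isometries of $(M,J_0,\omega)$, correcting $\rho$ by an element of $K$ so as to land in the exact symplectomorphism group, following the argument of Inoue \cite[Corollary 3.12, Proposition 4.10]{EI}. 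This is where the strength of Proposition \ref{EI-injectivity}---that injectivity holds for \emph{all} of $\H^2_{k+1}$---is essential.
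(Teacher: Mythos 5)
Your proposal is correct and follows essentially the same route as the paper's proof: reduce to zeros of the moment map in the two orbits, use the natural $L^2_k$-diffeomorphisms of Remark \ref{singular-diffeo} together with uniqueness of cscK metrics (Theorem \ref{thm:uniqueness}(ii)) to upgrade the abstract isomorphism to an isometric biholomorphism, and then conclude from the injectivity of $F$ in Proposition \ref{EI-injectivity}. The one step you leave ``granted'' --- that the resulting map lies in $\H^2_{k+1}$ --- is exactly where the paper is also terse: it asserts exactness and handles the regularity upgrade from $L^2_k$ to $L^2_{k+1}$ by elliptic regularity for the harmonic (isometric) map $f$, so your plan to close this via Inoue's argument is consistent with the paper's treatment.
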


\begin{proof}

The required statement is independent of replacing $p,q$ by points in their respective orbits, hence we may replace $p,q$ with the points corresponding to cscK manifolds in their orbits. 

The embedding $\Phi: B \hookrightarrow \J^2_k$ gives $L^2_k$-complex structures associated to $p,q$ denoted $J_p,J_q$, with $(M,\omega,J_p)$ and $(M,\omega,J_q)$ both cscK by hypothesis. We thus obtain K\"ahler manifolds $(M,\omega,J_p)$ and $(M,\omega,J_q)$ which are cscK. Thus by elliptic regularity (e.g.  \cite[Theorem 6.3]{FS}), the almost complex structures $J_p$ and $J_q$  are actually smooth. 

Since $(M,J_p)$ and $(M,J_q)$ are biholomorphic, there is a smooth map $m: M \to M$ such that $m^*J_q = J_p$. Then by the uniqueness of cscK metrics stated as Theorem \ref{thm:uniqueness} $(ii)$, there is a $g \in \Aut(M,J_p)$ such that $g^*m^*\omega =  \omega$, so that also $J_p = g^*m^*J_q$.  Thus $(m \circ g)^* \omega = \omega$ and $( m \circ g )^*J_q = J_p$, so that in the notation of Proposition \ref{EI-injectivity} $$F(p,\id) = F(q,m\circ g).$$ The injectivity of the map $F$ provided by Proposition \ref{EI-injectivity} implies $m\circ g \in H_X \subset G_X$, and so finally $q \in G_X.p$ as required.\end{proof}

\subsection{Taking the quotient} As the ball $B$ only admits a local $G_X$-action rather than a genuine action, we cannot take the GIT quotient directly. To rectify this, we first replace $C\subset B$ with a Stein neighbourhood $W'_X \subset C$ of the origin in $C\subset \tilde H^1$, and consider the orbit $G.W'_X \subset \tilde H^1$. This is a complex space with a genuine $G$-action, which we will denote $$W_X=G.W'_X \subset \tilde H^1.$$ The  following, due to Heinzner (combining the Extension Lemma on p.\ 636 and the Complexification Theorem on p.\ 631 of \cite{PH}) is the key observation which  allows us to apply Stein GIT.

\begin{proposition} $W_X$ is a Stein space. \end{proposition}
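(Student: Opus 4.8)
The plan is to reduce the statement to Heinzner's theorem on $G$-saturations of $K$-invariant Stein sets, once we exploit that we are now working with the \emph{genuine} linear $G$-action on $\tilde H^1$ rather than the local action on $B_\epsilon$. Recall from the deformation theory that $\tilde H^1$ is a finite-dimensional complex vector space carrying a linear action of $G=\Aut(X,\alpha)$, that $G$ is reductive with maximal compact subgroup $K$ (Theorem \ref{thm:uniqueness}$(i)$), and that $\mfg = \mfk \oplus i\mfk$. Since $G$ acts by linear isomorphisms, each translate $g\cdot W'_X$ is open, so $W_X = G\cdot W'_X = \bigcup_{g\in G} g\cdot W'_X$ is automatically open in $\tilde H^1$; the entire content of the proposition is therefore the Stein property.

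First I would arrange that $W'_X$ is $K$-invariant. As $K$ is compact and acts linearly, after shrinking we may take $W'_X$ to be a $K$-invariant Stein neighbourhood of the origin, for instance the unit ball of a $K$-invariant Hermitian inner product on $\tilde H^1$, or a sublevel set of the $K$-average of a strictly plurisubharmonic exhaustion of the original neighbourhood. This replacement is harmless: any such $W'_X$ still contains $0$ and lies in $B_\epsilon$, and it is now a Stein $K$-space. With this in place, $W_X$ is exactly the $G$-saturation of a $K$-invariant open Stein subset of the linear $G$-representation $\tilde H^1$, and the assertion is precisely the statement of Heinzner's theory of GIT on Stein spaces \cite{HL,PH}.

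I would then sketch the internal mechanism. The goal is to produce a $G$-invariant strictly plurisubharmonic exhaustion $\hat\rho$ of $W_X$. One starts from a $K$-invariant strictly plurisubharmonic exhaustion $\rho$ of $W'_X$, which exists since $W'_X$ is Stein and $K$ is compact (average over $K$). Using the polar decomposition $G = K\exp(i\mfk)$, the behaviour of $\hat\rho$ is governed by the geodesic directions $t\mapsto \exp(it\xi)\cdot p$ with $\xi\in\mfk$: along each such direction a $K$-invariant plurisubharmonic function restricts to a convex function (the Kempf--Ness phenomenon, in the form of Azad--Loeb), which simultaneously controls the behaviour of $\hat\rho$ on $G$-orbits, forces it to attain its orbitwise minimum on a single $K$-orbit, and guarantees that the extension remains strictly plurisubharmonic. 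Checking that $\hat\rho$ is moreover an exhaustion of the (a priori much larger) set $W_X$ then yields that $W_X$ is Stein.

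The step I expect to be the main obstacle is precisely this preservation of strict plurisubharmonicity under the $\exp(i\mfk)$-extension, together with the verification that $\hat\rho$ exhausts the full saturation $W_X$; this is the Kempf--Ness content underlying all of analytic GIT, and it is exactly what Heinzner's machinery \cite{HL,PH} supplies in the Stein setting. Once $W_X$ is known to be a Stein space carrying a genuine $G$-action, the Snow--Heinzner theorem recalled above applies directly to produce the quotient $W_X \git G$.
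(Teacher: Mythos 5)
Your overall strategy---arrange for $W'_X$ to be $K$-invariant and then invoke Heinzner's theorem that the $G$-saturation of a $K$-invariant, orbit-convex Stein subset of a holomorphic $G$-space is Stein---is exactly what the paper intends: the proposition is stated with an attribution to Heinzner and no further argument, and your Kempf--Ness sketch is a faithful expansion of the machinery in \cite{PH,HL} that the attribution points to.

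There is, however, one concrete wrong step. You treat $W'_X$ as an open subset of the vector space $\tilde H^1$, assert that $W_X=G\cdot W'_X$ is ``automatically open in $\tilde H^1$'', and propose realising $W'_X$ as the unit ball of a $K$-invariant Hermitian inner product on $\tilde H^1$. This fails in general: by construction $W'_X$ is a neighbourhood of the origin in $B_\epsilon$, the locus of \emph{integrable} complex structures, which is a (possibly singular) closed analytic subset of the ball $\hat B_\epsilon\subset\tilde H^1$ of positive codimension whenever the deformation theory is obstructed---and the paper emphasises that obstructedness is precisely one of the complications here, in contrast to the Fano case. A ball in $\tilde H^1$ would contain non-integrable points, over which there is no Kuranishi family, so the subsequent construction of the moduli space would collapse; and $W_X$ is a locally closed analytic subset of $\tilde H^1$, not an open one. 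The repair is straightforward and leaves the rest of your argument intact: take $W'_X$ to be the intersection of a $K$-invariant ball (for a $K$-invariant Hermitian norm on $\tilde H^1$) with $B_\epsilon$. This is still Stein, being a closed analytic subset of a Stein open set; it is $K$-invariant because $K$ preserves integrability; and it is orbit-convex, since $t\mapsto\log\lVert\exp(it\xi)\cdot p\rVert$ is convex and the local $G$-invariance of $B_\epsilon$ keeps the curve inside the integrable locus as long as it stays in $\hat B_\epsilon$. Heinzner's saturation theorem applies to Stein $K$-spaces, not only to open subsets of representations, so with this modification your reduction goes through.
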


 In order to understand the quotient $W_X \git G_X$, it is important that the GIT quotient is essentially a local procedure in $W_X$, in the following sense.

\begin{lemma} A point in $C$ is polystable with respect to the local $G_X$-action on $B$ if and only if it is polystable with respect to the $G_X$-action on $W_X$.  \end{lemma}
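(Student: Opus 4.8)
The plan is to prove the equivalence by relating polystability in both settings to closedness of orbits, carefully distinguishing the local $G$-action on $B_\epsilon$ (which only guarantees that $g.p$ lands in $\hat B_\epsilon$ for $g$ near the identity) from the genuine global $G$-action on $W_X = G.W'_X$. Recall that polystability means closedness of the relevant orbit. For a point $p \in B_\delta$, the local notion concerns closedness of the local orbit $G.p \cap \hat B_\epsilon$ inside $\hat B_\epsilon$, while the global notion concerns closedness of the full orbit $G.p$ inside the Stein space $W_X$.

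First I would observe that since $B_\delta \subset B_\epsilon \subset W'_X \subset W_X$ and $W_X = G.W'_X$, the global orbit $G.p$ of any $p \in B_\delta$ is the same set whether computed in $W_X$ or abstractly; the issue is purely whether this orbit is closed in $W_X$ versus whether its intersection with $\hat B_\epsilon$ is closed in $\hat B_\epsilon$. The key geometric input is that $W_X$ is a Stein $G$-space to which the theory of Snow-Heinzner applies, so global polystability is genuinely GIT-theoretic. The forward direction — global polystability implies local polystability — should be the easier one: if $G.p$ is closed in $W_X$, then its intersection with the open subset $\hat B_\epsilon$ is relatively closed in $\hat B_\epsilon$, and one checks that this relatively closed set is precisely the local orbit, giving local polystability.

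The harder direction is the converse: local polystability implies global polystability. Here the natural strategy is to argue by contradiction. Suppose $p \in B_\delta$ is locally polystable but $G.p$ is not closed in $W_X$. By the Stein GIT theory, the orbit closure $\overline{G.p}$ then contains a closed (hence polystable) orbit $G.q$ of strictly smaller dimension, and one can arrange $q$ to be a zero of the moment map. The crucial point is to locate $q$, or a suitable translate, back inside $\hat B_\epsilon$: I would use the Kempf-Ness/moment-map description together with Theorem \ref{TBGS}(ii), which says that when the local $G$-orbit of $p$ is not closed in $\hat B_\epsilon$ the closure contains a moment-map zero in $\hat B_\epsilon$. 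The thrust is to show that degeneracy of the global orbit forces degeneracy of the local orbit, so that local polystability of $p$ rules out the non-closedness of $G.p$. I expect the main obstacle to be precisely this transfer: ensuring that a global degeneration witnessed in $W_X$ can be realised within the slice $\hat B_\epsilon$, since $W_X$ is obtained by spreading $B_\epsilon$ out under the full group and a priori the degenerating one-parameter subgroup need not keep the orbit inside $\hat B_\epsilon$.

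To handle this obstacle I would exploit the moment-map picture uniformly. Both the local action on $\hat B_\epsilon$ (via $\Phi^*\Theta$ and its moment map $\mu$) and the global action on $W_X$ are governed by the scalar curvature moment map, and polystability in either setting is equivalent, by the Kempf-Ness theorem in the analytic GIT framework, to the orbit meeting the zero set of the moment map. Thus I would reduce both notions of polystability of $p$ to the single condition that $G.p$ contains a zero of $\mu$, which is manifestly independent of whether one views the orbit locally or globally; combined with the stabiliser-preservation result (Corollary \ref{cor:stb}) and Proposition \ref{orbits} to control orbits of moment-map zeros, this common characterisation yields the equivalence. The cleanest writeup would therefore centre on the statement that, for $p \in B_\delta$, local polystability $\iff$ $G.p \cap \mu^{-1}(0) \neq \emptyset$ $\iff$ global polystability, with the middle condition serving as the bridge between the two topologies.
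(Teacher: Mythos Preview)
The paper disposes of this lemma in a single sentence, treating it as an immediate consequence of the construction $W_X = G\cdot W'_X$ with $W'_X \subset B_\epsilon$: since $\overline{G.p}\setminus G.p$ is $G$-invariant, it meets $W_X$ if and only if it meets $W'_X\subset B_\epsilon$, so closedness of the orbit in one ambient space is equivalent to closedness in the other. No moment maps, Kempf--Ness theory, or results on cscK metrics enter.

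Your plan, by contrast, has two gaps. First, the direction you call easy is not: $\hat B_\epsilon$ is an open ball in $\tilde H^1$, not an open subset of $W_X$ (which lies in the integrable locus, a proper analytic subvariety), so ``$G.p$ closed in $W_X$'' does not formally imply ``$G.p\cap\hat B_\epsilon$ closed in $\hat B_\epsilon$'' by restriction to an open set. In fact it is the contrapositive of the \emph{other} implication that follows cleanly from $G$-invariance, as above. Second, your moment-map bridge is not available in the paper's setup: the map $\mu$ of Theorem~\ref{TBGS} lives only on $\hat B_\epsilon$ via $\Phi^*\Theta$, and no symplectic form or moment map is ever put on $W_X$, so there is no Kempf--Ness statement relating closedness in $W_X$ to the condition $G.p\cap\mu^{-1}(0)\neq\varnothing$. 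One could try to introduce the flat linear moment map on $\tilde H^1$ and reconcile its zero set with $\mu^{-1}(0)$, but that is considerably more work than the paper's direct topological observation; the appeals to Corollary~\ref{cor:stb} and Proposition~\ref{orbits} are likewise unnecessary for what is purely a statement about orbit topology.
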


\begin{proof} 

It is clear that for $p \in C$, the orbit $G.p$ is closed if and only if the orbit is closed in $W_X$, which is precisely the statement of the Lemma.
\end{proof}

We now take the quotient of $W_X$ by $G$, which we denote by $$\pi: W_X \to \M_X = W_X \git G_X.$$ Since all orbits in $W_X$ intersect $B$, we obtain the following, which shows that $\M_X$ has a reasonable universal property, and again proves a locality statement for GIT quotients.

\begin{lemma}\label{shrinking} There is a neighbourhood $\B \subset \M_X$ of $\pi(0)$ such that $ \B \subset \pi(C)$. In particular for all $b \in \B$, $$ \pi^{-1}(b) \cap C \neq \varnothing.$$\end{lemma}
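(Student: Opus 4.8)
The plan is to exhibit $\B$ as the complement of the image of an explicit closed $G$-invariant set, and to read off both assertions from the standard functorial properties of the analytic GIT quotient recalled in Section \ref{sec:prelim}. First I would note that $0 \in B_\delta$ is polystable with closed orbit $G.0 = \{0\}$, since $G$ acts linearly on $\tilde H^1$ fixing the origin; thus $\pi(0)$ is represented by the closed orbit $\{0\}$. I would also observe that the two assertions are equivalent: $b \in \pi(B_\delta)$ means precisely that some point of $B_\delta$ lies in $\pi^{-1}(b)$, i.e.\ $\pi^{-1}(b) \cap B_\delta \neq \varnothing$. So it suffices to produce an open neighbourhood $\B$ of $\pi(0)$ with $\B \subset \pi(B_\delta)$.

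The key object is the set $U := G.(B_\delta \cap W_X)$. Since $\hat B_\delta$ is an open ball and $W_X$ consists of integrable almost complex structures, $B_\delta \cap W_X = \hat B_\delta \cap W_X$ is open in $W_X$; hence $U = \bigcup_{g \in G} g.(B_\delta \cap W_X)$ is an open, $G$-invariant subset of $W_X$ containing $0$. I would then pass to the complement: $W_X \setminus U$ is closed and $G$-invariant, so by the standard property of the quotient that closed $G$-invariant sets have closed image (see \cite{ABB,HL}), the set $\pi(W_X \setminus U)$ is closed in $\M_X$. I define $\B := \M_X \setminus \pi(W_X \setminus U)$, which is therefore open.

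The crucial verification is that $\pi(0) \in \B$, equivalently $\pi(0) \notin \pi(W_X \setminus U)$. Suppose $z \in W_X \setminus U$ with $\pi(z) = \pi(0)$. As $W_X \setminus U$ is closed and $G$-invariant it contains $\overline{G.z}$, and the fibre over $\pi(z)$ contains a unique closed orbit, namely the one lying in $\overline{G.z}$. Since $\pi(z) = \pi(0)$ and $\{0\}$ is the closed orbit over $\pi(0)$, this forces $0 \in \overline{G.z} \subset W_X \setminus U$, contradicting $0 \in U$. Hence $\pi(0) \in \B$. Finally, for any $b \in \B$ we have $\pi^{-1}(b) \cap (W_X \setminus U) = \varnothing$, so $\pi^{-1}(b) \subset U = G.(B_\delta \cap W_X)$; picking $z \in \pi^{-1}(b)$ and writing $z = g.r$ with $r \in B_\delta \cap W_X$ gives $\pi(r) = \pi(z) = b$, so $r \in \pi^{-1}(b) \cap B_\delta$, whence $b \in \pi(B_\delta)$ and $\B \subset \pi(B_\delta)$.

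The main obstacle is the one genuine subtlety here: $U$ is open and $G$-invariant but need \emph{not} be saturated, so one cannot directly conclude that $\pi(U)$ is open and argue naively. The argument circumvents this by working with the closed complement $W_X \setminus U$ and invoking closedness of images of closed invariant sets, together with the uniqueness of the closed orbit in each fibre and its containment in every orbit closure of that fibre. All of these rest on the good-quotient properties of the Heinzner quotient, so the proof is essentially a formal manipulation once those properties are in hand; the facts that every orbit in $W_X$ meets $B_\epsilon$ and that $0$ is a fixed point of the linear action are what make $U$ a legitimate open neighbourhood of the closed orbit over $\pi(0)$.
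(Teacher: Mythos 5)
Your proof is correct. There is in fact nothing to compare it against: the paper states Lemma \ref{shrinking} bare, preceded only by the one-line remark that every $G$-orbit in $W_X$ meets $B_{\epsilon}$ (which by itself only gives $\M_X=\pi(W'_X)$, not an open set inside $\pi(B_{\delta})$), so your argument supplies the justification the authors leave implicit. The three facts you use --- surjectivity of $\pi$, closedness of $\pi(A)$ for $A$ closed and $G$-invariant, and the existence of a unique closed orbit in each fibre which is contained in the closure of every orbit of that fibre --- are all standard properties of the Heinzner/Snow quotient of a Stein space by a reductive group and are available in the references \cite{ABB,HL} recalled in Section \ref{sec:prelim}; and the origin is indeed a fixed point of the (linear) $G$-action on $\tilde H^1$, so $\{0\}$ is the closed orbit over $\pi(0)$. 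Your remark that $G.(B_{\delta}\cap W_X)$ is open and $G$-invariant but need not be saturated, which forces the detour through the closed complement $W_X\setminus U$ rather than a naive ``$\pi$ of an open invariant set is open'' argument, is precisely the point that makes the lemma require proof at all.
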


This essentially says that the local structure of $\M_X$ is determined entirely by the local action on $B$. We now replace $\M_X$ with $\B$, so that each point of $\M_X$ (by our replacement) corresponds exactly to a K\"ahler manifold which admits a cscK metric and which is a sufficiently small deformation of $(X,\alpha)$.

\subsection{Gluing the local moduli spaces} We now complete the proof of our first main result, namely the existence of a moduli space of cscK manifolds. So far, to each cscK manifold $(X,\alpha_X)$ we have associated a complex space which is a local moduli space $\M_X$ of cscK manifolds around $(X,\alpha_X)$. That is, points of $\M_X$ correspond to cscK manifolds, and by Remark \ref{rmk:completeness}, any sufficiently small cscK deformation of $(X,\alpha_X)$ is represented by a unique point of $\M_X$. What we show next is that one can glue these spaces together to form the moduli space globally. 

We take a point $p_Y \in \M_X$ corresponding to a K\"ahler manifold $(Y,\alpha_Y)$, with corresponding local moduli space $\M_Y = W_Y \git G_Y$, where $G_Y = \Aut(Y,\alpha_Y)$. To glue the local moduli spaces, we give a canonical isomorphism from a neighbourhood $U_Y\subset \M_X$ around $p_Y$ to a neighbourhood $V_Y \subset \M_Y$.  

\begin{proposition}\label{localgluing} There exist open neighbourhoods $U_Y\subset \M_X$ around $p_Y$ and $V_Y \subset \M_Y$ such that $U_Y$ and $V_Y$ are canonically isomorphic. \end{proposition}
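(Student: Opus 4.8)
The plan is to realise a neighbourhood of $p_Y$ in $\M_X$ as a GIT quotient by the smaller reductive group $G_Y=\Aut(Y,\alpha_Y)$, and then to identify the space being quotiented with a neighbourhood of the origin in the Kuranishi space of $(Y,\alpha_Y)$; this exhibits it as a neighbourhood of the image of the origin in $\M_Y$. First I would lift $p_Y$ to a point $w \in W_X$ whose $G$-orbit is closed, so that $w$ is polystable with associated manifold $(Y,\alpha_Y)$; by Lemma \ref{shrinking} we may take $w \in B_\delta$. Since the orbit is closed, the stabiliser $G_w$ is reductive, and using the stabiliser preservation Corollary \ref{cor:stb} together with the fact that the $G$-orbit through $w$ parametrises manifolds isomorphic to $(Y,\alpha_Y)$ (Proposition \ref{orbits} and Theorem \ref{TBGS}), I would identify $G_w$ with $G_Y$: the stabiliser in $K$ is the Hamiltonian isometry group of the cscK metric on $Y$, whose complexification is $G_Y$.

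Next I would apply the Luna slice theorem in the analytic Stein GIT framework of Heinzner-Loose \cite{HL} at the polystable point $w$. This produces a locally closed, $G_Y$-invariant slice $S \ni w$ in $W_X$, transverse to the orbit $G.w$, such that the induced map $S \git G_Y \to W_X \git G = \M_X$ is an isomorphism onto an open neighbourhood $U_Y$ of $p_Y$. Thus $U_Y \cong S \git G_Y$, and the problem is reduced to understanding the $G_Y$-space $S$.

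The crucial step is to identify $S$, $G_Y$-equivariantly, with a neighbourhood of the origin in the Kuranishi space $B^Y_\delta$ of $(Y,\alpha_Y)$. The Kodaira-Spencer map identifies the tangent space $T_wS$, which is the complement of the orbit directions $\mfg/\mfg_w$ inside the tangent space of $W_X$ at $w$, with the harmonic space $\tilde H^1_Y$ governing deformations of $(Y,\alpha_Y)$. Restricting the Kuranishi family $(\X,\alpha_{\X})$ of $X$ to $S$ yields a deformation of $(Y,\alpha_Y)$ with this tangent space; since the Kuranishi family of $X$ is complete at $w$ and that of $(Y,\alpha_Y)$ is versal at the origin, the two restricted families are isomorphic on a neighbourhood, the isomorphism being an isomorphism on tangent spaces by construction. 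I would then promote this to a $G_Y$-equivariant biholomorphism, arranging $K_Y$-equivariance by averaging the identification over the compact group $K_Y$ and complexifying. This gives $S \cong$ (a neighbourhood of the origin in $B^Y_\delta$) as $G_Y$-spaces, hence $S \git G_Y \cong V_Y \subset \M_Y$. For canonicity, both $U_Y$ and $V_Y$ have points in bijection with isomorphism classes of cscK manifolds near $(Y,\alpha_Y)$ by the correspondence between points of the quotient and polystable orbits (Proposition \ref{orbits}), and the composite isomorphism respects this correspondence; as the spaces are reduced and a holomorphic map respecting the correspondence is determined by it, the isomorphism is independent of the choices of $w$, of $S$, and of the versal identification, and is therefore canonical.

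I expect the main obstacle to be precisely this equivariant matching: versal deformations are not universal in the presence of automorphisms, so the identification of $S$ with the Kuranishi space of $(Y,\alpha_Y)$ is a priori only an isomorphism of deformations and must be upgraded to a $G_Y$-equivariant one. Controlling this equivariance, together with the attendant $L^2_k$-completion and Banach-manifold bookkeeping inherited from the slice construction of Theorem \ref{TBGS}, is where the genuine work lies.
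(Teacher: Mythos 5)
Your route is genuinely different from the paper's: you propose an analytic Luna slice at a polystable lift $w$ of $p_Y$, followed by a $G_Y$-equivariant identification of the slice $S$ with the Kuranishi space of $(Y,\alpha_Y)$, so that $U_Y\cong S\git G_Y\cong V_Y$. The paper instead never slices: it uses completeness of the Kuranishi family of $X$ at nearby points to produce a map $\nu\colon W_X\to W_Y$, shows the composite $\nu'\colon W_X\to\M_Y$ is $G_X$-invariant by a case analysis on polystable versus strictly semistable orbits (using Theorem \ref{TBGS}, Corollary \ref{cor:stb}, Proposition \ref{orbits} and, crucially, Chen--Sun's Theorem \ref{thm:CS} for the semistable case), and then constructs the inverse by running the same argument with the roles of $X$ and $Y$ exchanged, precisely because a bijective holomorphic map of complex spaces need not be biholomorphic. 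The fact that Chen--Sun's uniqueness theorem plays no role in your argument is a symptom of where the difficulty has been relocated rather than resolved.

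The gap is the step you yourself flag as the main obstacle: the $G_Y$-equivariant identification $S\cong B^Y_\delta$. Completeness of the Kuranishi family of $X$ at $w$ does give a classifying map from $S$ to $\mathrm{Kur}(Y)$, but versality of the restricted family at $w$ --- equivalently, the claim that the kernel of the Kodaira--Spencer map at $w$ is exactly the orbit direction $\mfg_X\cdot w$, so that $T_wS\cong\tilde H^1_Y$ and $S$ has the right dimension --- does not follow. When $h^0(\X_t,T_{\X_t})$ jumps at $w$ (which is exactly the situation of interest, since $\Aut(Y,\alpha_Y)$ is smaller than $\Aut(X,\alpha_X)$), the Kuranishi family of $X$ is complete but not versal at $w$, and whether the defect is accounted for precisely by the $G_X$-orbit is the entire content of the proposition; in the obstructed setting one would also have to match the two singular germs, not just their tangent spaces. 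Your proposed fix for equivariance --- averaging the identification over $K_Y$ --- is not available as stated: the comparison isomorphisms between two versal families do not form a linear space over which one can average, so one needs an equivariant versality theorem, which is not supplied. Finally, once the identification is only known to be a classifying map rather than an isomorphism, the induced map $S\git G_Y\to\M_Y$ is merely bijective onto its image, and you are back to exactly the bijective-but-not-obviously-biholomorphic problem that the paper's two-directional construction is designed to circumvent.
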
 

\begin{proof}

Since $W_X$ is (locally) a Kuranishi space for $(X,\alpha_X)$, it is a complete deformation space for small deformations of $X$, and so we may assume that it is complete for $(Y,\alpha_Y)$. In the argument we shall often shrink $W_X,W_Y$, which by  Proposition \ref{shrinking} does not affect the local structure of the quotient. Thus, perhaps after shrinking $W_X$, there is a map $$\sigma: W_X\to W_Y.$$ We compose this map with the quotient map under the $G_Y$-action to obtain a map $$\nu: W_X \to \M_Y = W_Y \git G_Y.$$ 

We wish to obtain a map locally from $\M_X = W_X\git G_X$ to $\M_Y$. To obtain such a map, since the GIT quotient is categorical, it is enough to show that $\nu$ is $G_{X}$-invariant. Here we note that although $W_X$ has been shrunk hence may not be $G_X$-invariant itself, one can always take its orbit and extend $\nu$ to a holomorphic map using that $G_X$ acts by biholomorphisms (by the construction of the GIT quotient such a $G_X$-invariant function will indeed descend to $\M_X$). We thus take two points $p,q \in W_X$ lying in the same $G_{X}$-orbit. To prove the required equivariance, we consider two distinct cases:

\indent \emph{Case (i):} The orbit $G_X.p$ is polystable. Then the manifolds corresponding to $p$ and $q$ in the Kuranishi family over $W_X$ are isomorphic as they lie in the same $G_X$-orbit, and admit a cscK metric. Thus the manifolds appearing as fibres in the Kuranishi family over $W_Y$ corresponding to $\sigma(p)$ and $\sigma(q)$ are also isomorphic and cscK. We cannot conclude from Theorem \ref{ortu} that the orbits of $\sigma(p),\sigma(q)$ are polystable with respect to the $G_Y$-action on $W_Y$, however Theorem \ref{ortu} does imply that the polystable degenerations of $\sigma(p),\sigma(q)$ must represent isomorphic manifolds. These polystable degenerations must then lie in the same $G_Y$-orbit by Corollary \ref{cor:stb}. Sinces GIT quotients map a semistable orbit to the same point as its polystable degeneration, this shows that $\nu(p) = \nu(q)$, as desired.

\indent \emph{Case (ii):} The orbit $G_X.p$ is strictly semistable. Hence $q$ is also semistable. Since $p,q$ are in the same $G_X$-orbit, their corresponding polarised manifolds are isomorphic. Again it follows that the fibres of the family over $W_Y$ corresponding to $\sigma(p)$ and $\sigma(p)$ are isomorphic. Because $\sigma(p)$ and $\sigma(q)$ are only GIT-semistable in this case, we cannot conclude that they lie in the same $G_Y$-orbit. A semistable orbit admits a degeneration to a cscK manifold in the sense of Theorem \ref{thm:CS} by part of Theorem \ref{ortu}. But applying the uniqueness result of Chen--Sun, namely Theorem \ref{thm:CS}, we know that the polystable degenerations of $\sigma(p)$ and $\sigma(q)$ must then represent the same cscK manifold, giving again that  $\nu(p)=\nu(q)$.

Since each small cscK deformation is represented in the quotients $\M_X, \M_Y$ exactly once, the induced map $\M_X \to \M_Y$ we have constructed must be injective. Moreover, as the Kuranishi family for $X$ is complete for $Y$, this map is also locally surjective, ultimately by Remark \ref{rmk:completeness}. We cannot conclude the proof in general at this point, as a bijective holomorphic map between complex spaces is not necessarily a biholomorphism (consider for example the normalisation of a cuspidal curve). 

We instead proceed by using that a Kuranishi space induces a complete deformation for nearby points. This allows us to construct a (local) map $\sigma': W_Y \to W_X$, such that the universal family over $W_X$ pulls back to that over $W_X$. We compose this with the quotient map $W_X \to \M_X$ to obtain a map $\nu': W_Y \to \M_X$. A similar argument to the above shows that $\nu'$ is $G_Y$-invariant, hence we obtain a holomorphic map $\M_Y \to \M_X$ which is clearly an inverse to the map constructed above. This gives the local isomorphism as desired.\end{proof}

We use this to prove our main result.

\begin{corollary}\label{construction} There exists a Hausdorff complex space $\M$ which is a moduli space of polarised manifolds which admit a cscK metric. \end{corollary}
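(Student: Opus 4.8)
The plan is to assemble $\M$ by gluing the local moduli spaces $\B \subset \M_X$ constructed above along the canonical isomorphisms produced by Proposition \ref{localgluing}, and then to verify separately that the resulting complex space is Hausdorff and satisfies the required moduli properties.

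First I would form the disjoint union of the charts $\M_X$, one for each isomorphism class of cscK polarised manifold $(X,\alpha_X)$, and glue them using the canonical isomorphisms $U_Y \cong V_Y$ of Proposition \ref{localgluing}. To obtain a genuine complex space rather than a mere topological gluing, the key point is that these identifications satisfy the cocycle condition on triple overlaps. This should follow from the word \emph{canonical}: each gluing isomorphism is uniquely determined by the property that it sends a point representing a cscK manifold $(Z,\alpha_Z)$ to the point representing the isomorphic manifold in the target chart, since by Proposition \ref{orbits} each such isomorphism class is represented exactly once in each local quotient. Two composable gluing maps and the direct gluing map therefore all carry a given isomorphism class to the same point, giving the cocycle identity and hence a well-defined complex space $\M$.

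Next I would check the moduli properties. That the points of $\M$ are in bijection with cscK polarised manifolds follows because each point of a chart $\B$ represents a cscK manifold (Lemma \ref{shrinking}) and, by Proposition \ref{orbits}, two points represent the same manifold precisely when they lie in a common $G$-orbit, i.e.\ map to the same point of the quotient. To see that a polarised family $f\colon \X \to S$ of cscK manifolds induces a map $S \to \M$, I would work locally: completeness of the Kuranishi family gives, near each $s_0 \in S$, a holomorphic map from a neighbourhood of $s_0$ into the corresponding $W_X$, which composes with the quotient $W_X \to \M_X$ to a map into a chart; the moduli interpretation forces these local maps to agree on overlaps, yielding a global $S \to \M$ compatible with the point correspondence.

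The hard part will be Hausdorffness, which is the crucial separatedness property. I would argue by contradiction: if distinct points $m_1, m_2 \in \M$, corresponding to non-isomorphic cscK manifolds $(X_1,\alpha_1)$ and $(X_2,\alpha_2)$, cannot be separated, there is a sequence $n_j$ with $n_j \to m_1$ and $n_j \to m_2$ of points representing cscK manifolds $Z_j$. Pulling $n_j \to m_i$ back to the chart $\M_{X_i} = W_{X_i}\git G_i$ produces polystable points converging to the origin, and hence, by restricting the Kuranishi family over $W_{X_i}$ to a holomorphic arc through the origin, a genuine one-parameter degeneration of a nearby cscK fibre to $(X_i,\alpha_i)$. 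Arranging these families to share a common general fibre and applying the uniqueness of cscK degenerations of Chen-Sun, namely Theorem \ref{thm:CS}, would force $(X_1,\alpha_1) \cong (X_2,\alpha_2)$, contradicting $m_1 \neq m_2$. I expect the main technical obstacle to lie precisely in this last step: converting the \emph{sequential} failure of separation into an actual holomorphic family over a disc with a fixed general fibre and the prescribed central fibre, so that Chen-Sun's theorem --- which requires a bona fide degeneration rather than a mere limit --- can be brought to bear.
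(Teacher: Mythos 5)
Your gluing construction and the verification of the moduli properties essentially match the paper's proof: the paper also builds $\M$ as the set of isomorphism classes covered by charts $\varphi_X\colon \scN_X \to \M_X$, with the transition maps $\varphi_X\circ\varphi_Y^{-1}$ shown to be biholomorphic via Proposition \ref{localgluing}, and the cocycle condition is automatic for exactly the reason you give (each isomorphism class occurs exactly once in each chart, so the transition maps are forced at the level of sets). The genuine divergence, and the gap, is in your Hausdorffness argument. The paper does not prove Hausdorffness by manufacturing degenerations and invoking Chen--Sun; it obtains separation from the local symplectic description of each chart as a quotient $\mu^{-1}(0)/K$ of a Hausdorff space by a compact group, together with Hausdorffness of the Kuranishi space. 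Chen--Sun's uniqueness theorem enters the paper's logic earlier (in Proposition \ref{localgluing}) and later (in Theorem \ref{separated-moduli}, whose proof in fact \emph{uses} the Hausdorffness of $\M$), but not in the Hausdorffness step itself.

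The obstacle you flag at the end of your argument is precisely where it breaks down, and it is not a technicality that can be deferred. A sequence $n_j$ converging to both $m_1$ and $m_2$ gives a sequence of pairwise possibly non-isomorphic cscK manifolds $Z_j$; there is no fixed general fibre, hence no candidate for the family over a punctured disc that Theorem \ref{thm:CS} requires. Even within a single chart, the polystable representatives $p_j \in W_{X_1}$ converging to the origin need not lie on any holomorphic arc through the origin (the Kuranishi space may be singular, and the $p_j$ may be scattered), and even if such an arc existed you would still need to produce a second arc in $W_{X_2}$ whose restriction away from $0$ is isomorphic to the first --- the sequential data provides no such matching. So the contradiction cannot be derived as proposed. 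The repair is to drop the degeneration argument for this step and argue locally from the presentation of each chart as $\mu^{-1}(0)/K$, as the paper does.
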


\begin{proof} 

We endow the natural symplectic quotient with the structure of a complex space, using Proposition \ref{localgluing}. Consider the locus $$\left(S(\cdot ) - \hat S\right)^{-1}(0)\subset \J^{2,\inte}_k,$$ parametrising integrable almost complex structures with constant scalar curvature. Note by elliptic regularity, this space is actually independent of $k$. Here for $J\in \J^{\inte}_k$ an integrable almost complex structure on $M$ compatible with $\omega$, $S(J)$ denotes the scalar curvature of the K\"ahler metric $\omega$ on $(M,J)$. This space admits an action of $\H^2_{k+1}$, and the quotient $$\M = \left(S(\cdot) - \hat S\right)^{-1}(0)/\H^2_{k+1}$$ is a locally compact Hausdorff topological space (as in Inoue \cite[Section 3.3]{EI}).

Following Section \ref{sec:deformations}, we consider a cscK manfiold $(X,\alpha)$, so that the Kuranishi theory produces a $H_X$-equivariant (not necessarily holomorphic) injection $$\Phi: B\to  \J^{2,\inte}_k.$$ The locus of cscK metrics in $B$, through this injection, is contained in the polystable locus in $B$. We claim that $$\left(\left(S(\cdot ) - \hat S\right)^{-1}(0) \cap \Phi(B)\right)/H_X \cong \M_X.$$ Indeed, the map from the cscK locus of $B$ (namely the locus of $p\in B$ for which $\Phi(p)$ is cscK) to $\M_X$ is a $H_X$-invariant continuous map, and induces a continuous bijection  $$\left(\left(S(\cdot ) - \hat S\right)^{-1}(0) \cap \Phi(B)\right)/H_X \to \M_X.$$ Since both spaces are locally compact and Hausdorff, this induced map is a homeomorphism. 

In particular, we obtain an induced continuous map $$\Phi_X: \M_X \to \M.$$ We claim that $\Phi_X$ is a homeomorphism onto its image, following an argument of Inoue \cite[Corollary 3.12]{EI}.

Firstly, it follows from Proposition \ref{orbits} that the space $\M$ is a \emph{topological} moduli space, parametrising cscK manifolds, ultimately by the uniqueness of cscK metrics employed there. Thus since $\M_X$ also parametrises cscK manfiolds and the map $\Phi_X$ is compatible with this identification, the map $\Phi_X$ is injective. Its surjectivity  follows from completeness of the Kuranishi space, much as in the proof of Proposition \ref{localgluing}. Indeed, by Remark \ref{rmk:completeness} there is a neighbourhood around $\Phi(0)$ in $ \J^{2,\inte}_k$ such that any almost complex structure in this neighbourhood is biholomorphic to a complex structure corresponding to a point of $B$, and if any such almost complex structure admits a cscK metric, then a cscK metric (or more precisely, a cscK almost complex structure) is obtained as the image of a point in $B$. Thus $\Phi_X$ is again a continuous bijection between locally compact Hausdorff topological spaces, so a homeomorphism. In particular, the image $\Phi_X(\M_X)$ is open in $\M$.

We next use the maps $\Phi_X$ to endow the topological space $\M$ with the structure of a locally ringed space. Indeed, letting $\O_{\M_X}$ denote the sheaf of holomorphic functions on $\M_X$, the homeomorphism $\Phi_X$ endows the subset $\Phi_X(\M_X)$ with the sheaf $\Phi_{X*}(\O_{\M_X})$. For $X,Y$ with $\M_X\cap \M_Y \neq \emptyset$, Proposition \ref{localgluing} then gives an isomorphism between these sheaves on $\M_X\cap \M_Y$. We thus obtain a cover of the topological space $\M$, together with a sheaf on each chart, such that on triple intersections $\M_X\cap \M_Y \cap \M_Z$ the following diagram commutes:

\[
\begin{tikzcd}
 & \O_{\M_Y}|_{\M_X\cap \M_Y \cap \M_Z} \arrow[dr," "] \\
\O_{\M_X}|_{\M_X\cap \M_Y \cap \M_Z} \arrow[ur," "] \arrow[rr," "] && \O_{\M_Z}|_{\M_X\cap \M_Y \cap \M_Z}.
\end{tikzcd}
\]
This thus produces a sheaf $\O_{\M}$ on $\M$ whose restriction to $\M_X$ is isomorphic to $\O_{\M_X}$ \cite[Lemma 6.33.2]{stacks-project}.\color{black}

We have constructed a locally ringed space $(\M,\O_{\M})$, and the final claim is that this gives $\M$ the structure of a complex space, for which we need to produce---around each point---a local biholomorphism with an open neighbourhood of the origin in $\C^r/V(I)$ for a finitely generated ideal $I$ of holomorphic functions. This is provided by the construction, as $\M_X$ is constructed as a Stein GIT quotient, so $\M_X$ is Stein and $(\M,\O_{\M})$ is a Hausdorff complex space. \end{proof}

\subsection{Degenerations of cscK manifolds}

An important aspect of Theorem \ref{mainthm} is that the moduli space is \emph{Hausdorff}. Translating this statement into a statement about families of cscK manifolds, this states that sequential limits of cscK manifolds are actually unique. Here we prove some more precise statements along these lines.%, in a style that will be familiar to those working in moduli theory. 

\begin{definition}\label{def:semistable}We say that a polarised manifold $(X,\alpha_X)$ is \emph{analytically K-semistable} if there exists a polarised family $(\X,\alpha_{\X}) \to \C$ with $(\X_t,\alpha_{\X_t}) \cong (X,\alpha_X)$ for all $t\neq 0$ and such that $(\X_0,\alpha_{\X_0})$ is a polarised manifold which admits a cscK metric. \end{definition}

The following is a simple consequence of Ortu's Theorem \ref{ortu}; we briefly give the argument, as we will use the statement several times later. 

\begin{lemma}\label{lemma:open} Analytic semistability is an open condition in families of polarised manifolds. \end{lemma}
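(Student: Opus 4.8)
The plan is to reduce the statement to the local Kuranishi picture around the cscK degeneration, and then to exploit the fact that, by Theorem \ref{TBGS}, \emph{every} point of the Kuranishi ball of a cscK manifold is analytically semistable. Concretely, let $f:\X\to S$ be a family of polarised manifolds and let $s_0\in S$ be a point at which $(\X_{s_0},\alpha_{\X_{s_0}})$ is analytically semistable; write $X=\X_{s_0}$. By Definition \ref{def:semistable} there is a polarised family over $\C$ with general fibre $X$ and central fibre a cscK manifold $(X_0,\alpha_0)$. For small $t\neq 0$ the fibre $\X_t\cong X$ is a small deformation of the central fibre $X_0$, so $X$ is represented by a point $p$ of the Kuranishi space of $(X_0,\alpha_0)$; shrinking, we may take $p\in B_\delta$, with $X_p\cong X$ and with $0\in B_\delta$ corresponding to $X_0$.

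First I would record that every point of $B_\delta$ is analytically semistable. Indeed, let $q\in B_\delta$, so that $\Phi(q)$ is integrable. If the $G$-orbit of $q$ is closed in $\hat B_\epsilon$, then $X_q$ admits a cscK metric by Theorem \ref{TBGS}$(i)$, and so is trivially analytically semistable. If the orbit is not closed, then Theorem \ref{TBGS}$(ii)$ produces a zero of the moment map $q'\in\hat B_\epsilon$ in the closure of the orbit, and Sz\'ekelyhidi's construction (as used in the proof of Theorem \ref{TBGS}$(iii)$) yields a polarised degeneration of $X_q$ with cscK central fibre $X_{q'}$; hence $X_q$ is analytically semistable.

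Next I would transport the family $f$ into $B_\delta$. Since the Kuranishi family over $B_\delta$ is complete at every point of $B_\epsilon$, in particular at $p$, and since $\X_{s_0}\cong X_p$, there is a neighbourhood $S'\subset S$ of $s_0$ and a holomorphic map $\rho:S'\to B_\delta$ with $\rho(s_0)=p$ such that $f|_{S'}$ is the pullback of the Kuranishi family along $\rho$; after shrinking $S'$ we may assume $\rho(S')\subset B_\delta$. Then for every $s\in S'$ we have $\X_s\cong X_{\rho(s)}$ with $\rho(s)\in B_\delta$, so by the previous paragraph each $\X_s$ is analytically semistable. This exhibits a neighbourhood of $s_0$ consisting of analytically semistable manifolds, which proves openness.

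The step I expect to be the main obstacle is the local realisation of $f$ as a pullback of the Kuranishi family of the cscK degeneration $X_0$. One must first observe that $X$, although \emph{a priori} a ``large'' deformation, is in fact a small deformation of $X_0$ precisely because it degenerates to it, so that $X$ is represented by a genuine point $p\in B_\delta$; one must then invoke completeness of the Kuranishi family at $p$ to capture all nearby fibres of $f$ within the \emph{same} ball $B_\delta$, on which the uniform analytic semistability established above applies. I would take some care to ensure the polarisations match throughout, though this is automatic since the polarisation is a flat section of $R^2f_*\R$ and the Kuranishi family is polarised.
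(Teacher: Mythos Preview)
Your proposal is correct and follows essentially the same route as the paper: pass to the Kuranishi space of the cscK degeneration $(X_0,\alpha_0)$, use completeness of the Kuranishi family at the point representing $X$ to realise the given family locally as a pullback, and then invoke Theorem \ref{TBGS} to see that every point of $B_\delta$ has a polystable (hence cscK) degeneration inside $B_\epsilon$. Your write-up is in fact slightly more explicit than the paper's in separating the closed-orbit and non-closed-orbit cases and in flagging why $X$ genuinely lies in $B_\delta$.
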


Here, as throughout, we mean openness in the analytic topology.

\begin{proof}Let $(\Y,\alpha_{\Y}) \to B$ be a family of polarised manifolds with $(\Y_q,\alpha_{\Y_q})$ analytically K-semistable for some $p \in B$. Let $(Z,\alpha_{Z})$ be the cscK degeneration of $(\Y_q,\alpha_{\Y_q})$, and consider the Kuranishi space $W$ of the cscK manifold $(Z,\alpha_{Z})$ constructed in Theorem \ref{ortu}. Then $W$ is a complete deformation space in a neighbourhood of $(Z,\alpha_{Z})$, and hence is complete for $(\Y_q,\alpha_{\Y_q})$ as $(\Y_q,\alpha_{\Y_q})$ admits a degeneration to $(Z,\alpha_{Z})$.

Since the statement is local, we can thus assume there is a map $p: B \to W$ such that $(\Y,\alpha_{\Y}) \to B$ is the pullback of the universal family over $W$. In particular, there are balls $W_{\delta} \subset W_{\epsilon} \subset W$ with $p(\Y) \subset W_{\delta}$ such that for each point $w \in W$, either $p$ is polystable or the polystable degeneration of $w$ is contained in $W_{\epsilon}$. From Theorem \ref{ortu}, the manifolds associated to these polystable degenerations admit a cscK metric, and hence each point in $p^{-1}(W_{\delta})$ is analytically K-semistable as claimed. \end{proof}

Analytically K-semistable manifolds are the analogue of semistable orbits in GIT, and it follows from Thereom \ref{ortu} and \cite{SD-lower,RD2} that they are K-semistable. The above is the analogue of semistability being an open condition in GIT. A standard result in GIT is that the closure of a semistable orbit intersects a \emph{unique} polystable orbit; the corresponding statement for analytically K-semistable manifolds is Chen--Sun's Theorem \ref{thm:CS}. In GIT there are also more precise statements about the behaviour of polystable and semistable orbits in families. The analogue in our situation is the following.

\begin{corollary}\label{separated-moduli}

Let $C$ be a (not necessarily compact) Riemann surface with $0 \in C$ a point. Suppose $$\xymatrix@C=14pt@R=16pt{
&(\Y,\alpha_{\Y}) \ar[dr]&& \mathcal (\tilde \Y,\alpha_{\tilde \Y}) \ar[dl] \\
 &   & C }
$$ are two polarised families such that $({\Y},\alpha_{\Y})|_{C\backslash\{0\}} \cong  (\tilde \Y,\alpha_{\tilde \Y})|_{C\backslash\{0\}}.$

\begin{enumerate}[(i)]
\item (Semistable case) If  $(\Y_0,\alpha_{\Y_0})$ and $(\tilde \Y_0,\alpha_{\tilde \Y_0})$ are analytically K-semistable, then they degenerate to the same cscK manifold. 
\item (Polystable case) If  $(\Y_0,\alpha_{\Y_0})$ and $(\tilde \Y_0,\alpha_{\tilde \Y_0})$ admit a cscK metric, then $(\Y_0,\alpha_{\Y_0})\cong (\tilde \Y_0,\alpha_{\tilde \Y_0})$.  
\item (Stable case)  If $(\Y_0,\alpha_{\Y_0})$ admits a cscK metric, $\Aut(\Y_0,\alpha_{\Y_0})$ is discrete and $(\tilde \Y_0,\alpha_{\tilde \Y_0})$ is analytically K-semistable, then $(\Y_0,\alpha_{\Y_0})\cong  (\tilde \Y_0,\alpha_{\tilde \Y_0})$.

\end{enumerate}

\end{corollary}

\begin{proof} Since each of the statements is local around $0 \in C$ (as we allow $C$ to be shrunk in $(iii)$), we freely replace $C$ with a neighbourhood of $0$. Since analytic semistability is an open condition by Lemma \ref{lemma:open}, we can therefore assume all fibres are analytically K-semistable.

As each fibre over $C_t$ is analytically K-semistable, associated to each $t$ is a point $q_t\in \M$ corresponding  to the cscK degeneration of $(\Y_t,\alpha_{\Y_t})$. The main point of the proofs of the claims is that the $q_t$ depend continuously on $t$, namely that the induced map $$\nu: C \to \M$$ is continuous.  To prove this, we may consider the Kuranishi space $(\X,\alpha_{\X},\omega_{\X}) \to B$ of the cscK degeneration of $(\Y_0,\alpha_{\Y_0})$. By completeness, the family $(\Y,\alpha_{\Y})$ is pulled back from a morphism to $B$, and by Theorem \ref{ortu}, the cscK degeneration of each $p\in B$ corresponds to a point $q\in B$. It then follows from \cite[Proposition 2.4 and Equation (2.1)]{ortu}  (which relies on a result of Duistermaat \cite{flow}) that there is a \emph{continuous} map $B\to B$ such that each $p$ is sent to a corresponding $q \in B$ such that $\omega_{\X_q}$ is cscK. This implies that the points $b_t$ vary continuously with $t$, by construction of the moduli space.

$(i)$ By hypothesis, both $(\Y_0,\alpha_{\Y_0})$ and $(\tilde \Y_0,\alpha_{\tilde \Y_0})$ degenerate to a cscK manifold. Thus we obtain continuous maps $\nu: C\to \M$ and $\tilde \nu: C' \to \M$ by the preceding paragraph, such that $\nu(t) = \tilde \nu(t)$ for $t\neq 0$. Hausdorffness and continuity imply that $\nu(0)=\tilde \nu(0)$, which implies the result.

$(ii)$ This is immediate from $(i)$, together with the fact that a cscK manifold cannot degenerate to another non-isomorphic cscK manifold, by Theorem \ref{thm:CS}.

$(iii)$ By $(i)$, the cscK degeneration of $(\tilde \Y_0,\alpha_{\tilde \Y_0})$ must be $( \Y_0,\alpha_{ \Y_0})$. Consider the Kuranishi space $(\X,\alpha_{\X}) \to B$ of $( \Y_0,\alpha_{ \Y_0})$,  so that $( \Y_0,\alpha_{ \Y_0})$ corresponds to the origin in $B$ and the origin lies in the closure $0\in \overline{G_{\Y_0}}.q$ for a point $q$ corresponding to $(\tilde \Y_0,\alpha_{\tilde \Y_0})$ (since by Theorem \ref{ortu}, the closure of this orbit must contain a cscK manifold and such a manifold is unique). Since $\overline{G_{\Y_0}}$ is actually finite by assumption, it follows that $(\tilde \Y_0,\alpha_{\tilde \Y_0}) \cong( \Y_0,\alpha_{ \Y_0})$.  \end{proof}

\section{K\"ahler geometry of the moduli space}\label{sec-kahler}

\subsection{The Weil--Petersson metric}\label{sec-kahler2}

We next construct a K\"ahler metric on the moduli space, beginning with some general preliminary results around holomorphic submersions.

We first consider $\pi: \X \to B$ be a proper holomorphic submersion of relative dimension $n$, with $B$ a complex space and with $\alpha_{\X}$ a relatively K\"ahler class. Similarly to Section \ref{sec:deformations}, for $\omega_{\X}$ relatively K\"ahler, we obtain $\rho\in c_1(-K_{\X/B})$ and a (not-necessarily-positive) form \begin{equation}\label{WP2}\Omega_X =  \int_{\X/B}\left(\frac{\hat S}{n+1}\omega_{\X}^{n+1} - \rho \wedge \omega_{\X}^n \right), \end{equation} with notation as in Equation \eqref{initial-WP-formula}, so in particular $\hat S$ denotes the fibrewise average scalar curvature. In applications, when singular, $B$ will be taken to be the integrable locus inside the Kuranishi space, so one may construct the fibre integral by first constructing it on the ball $\hat B$ and restricting, as in Section \ref{sec:deformations}.

In order to understand the dependence of this fibre integral on the choice of $\omega_{\X}$, we recall the following important functional in K\"ahler geometry.

\begin{definition} Let $(Y,\alpha_Y)$ be a compact K\"ahler manifold of dimension $n$, and fix a reference K\"ahler metric $\omega_Y \in \alpha_Y$. Denote by $\scH_{\alpha_Y}$ the set of K\"ahler potentials in $\alpha_Y$ with respect to $\omega_Y$. For any $\omega_Y+\ddb \phi \in \scH_{\alpha_Y}$, let $\phi_t$ be a path with $\phi_0 = 0$ and $\phi_1=\phi$. Denote the associated K\"ahler metrics  along the path by $\omega_{Y,t} = \omega_Y + \ddb \phi_t$. The \emph{Mabuchi functional} is the functional $\scK_{\omega_Y}: \scH_{\alpha_Y} \to \R$ defined such that $$\frac{d}{dt}\scK_{\omega_Y}(\phi_t) = -\int_X\dot \phi_t (S(\omega_{Y,t}) - \hat S_Y)\omega_{Y,t}^n$$ and $\scK_{\omega_Y}(0)=0$.
\end{definition}

It is a classical result of Mabuchi that the definition is independent of choice of path \cite{mabuchi-symplectic}, producing a well-defined functional $\scK_{\omega_Y}: \scH_{\alpha_Y} \to \R$. The other key property that we will need, which is deeper, is that if $\omega_Y, \tilde \omega_Y \in \alpha_Y$ are cscK, then $\scK_{\omega_Y}(\tilde \omega_Y) = 0$ \cite{donaldson-embeddings-ii, BB}. More generally, and with respect to an arbitrary basepoint, any cscK metric achieves the absolute minimum of the Mabuchi functional. It follows that the value taken by the Mabuchi functional at an arbitary cscK metric is independent of cscK metric.

Returning to our submersion $\pi: \X \to B$, by restriction for each $p\in B$ the relatively K\"ahler metric $\omega_{\X}$ induces a K\"ahler metric $\omega_{\X_p} = \omega|_{\X_p}$, and hence one obtains a functional $\scK_{\omega_{\X_b}}: \scH_{\alpha_p} \to \R$. Thus if $\tilde \omega_{X}$ is another relatively K\"ahler metric,  one obtains a function $\scK_{\omega_X}(\tilde \omega_X): B \to \R$ defined by $$ \scK_{\omega_X}(\tilde \omega_X)(p) = \scK_{\omega_b}(\tilde \omega_p).$$ We also obtain a form $\tilde \Omega_{\X}$ on $B$ produced by the analogous formula to Equation \eqref{WP2}.

The following result is due to Phong-Ross-Sturm in the case $X$ and $B$ are projective \cite[Equation (7.5)]{PRS}, and follows from the techniques of \cite[Section 3]{DRoss} and \cite[Section 4]{ZSD2} in the general K\"ahler setting. 

\begin{proposition}\label{PRS}The fibre integrals $\Omega_{\X}$ and $\tilde \Omega_{\X}$ are related by $$\Omega_{\X}  - \tilde \Omega_{\X} = \ddb \scK_{\omega_{\X}}(\tilde \omega_{\X}).$$\end{proposition}

This will allow us to prove our first result on the behaviour of the forms $\Omega_{\X}$.

\begin{corollary}\label{cor:independence} Let $\omega_X, \tilde \omega_X \in \alpha$ be two relatively K\"ahler metrics which are each cscK on each fibre. Then $$\Omega_{\X}  = \tilde \Omega_{\X} .$$
\end{corollary}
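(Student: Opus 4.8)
The plan is to deduce the result directly from Proposition \ref{PRS} together with the key vanishing property of the Mabuchi functional recalled above. By Proposition \ref{PRS}, the two fibre integrals differ by an exact term:
$$\eta(\omega_X) - \eta(\tilde \omega_X) = \ddb\, c\,\scK_{\omega_X}(\tilde \omega_X),$$
where $c$ depends only on the dimension and volume of $X$. Since $c$ is a fixed constant, it suffices to show that the function $\scK_{\omega_X}(\tilde \omega_X) : B \to \R$ is identically zero; its $\ddb$ will then vanish and the corollary follows.

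To establish this, I would unwind the definition of this function pointwise. By construction $\scK_{\omega_X}(\tilde \omega_X)(b) = \scK_{\omega_b}(\tilde \omega_b)$, the value at the restricted metric $\tilde\omega_b = \tilde\omega_X|_{X_b}$ of the Mabuchi functional on the fibre $X_b$ taken with respect to the basepoint $\omega_b = \omega_X|_{X_b}$. Here both $\omega_b$ and $\tilde\omega_b$ lie in the fibre class $\alpha_b$, so $\tilde\omega_b \in \scH_{\alpha_b}$ and the functional $\scK_{\omega_b}$ is defined at $\tilde\omega_b$. By hypothesis $\omega_X$ and $\tilde\omega_X$ are each cscK on every fibre, so both $\omega_b$ and $\tilde\omega_b$ are cscK metrics in $\alpha_b$.

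I would then invoke the deep property of the Mabuchi functional, due to Donaldson and Berman-Berndtsson, that its value at a cscK metric taken with respect to a cscK basepoint is zero: cscK metrics achieve the absolute minimum, and with a cscK basepoint this minimum value is normalised to $0$. Applying this fibrewise gives $\scK_{\omega_b}(\tilde\omega_b) = 0$ for every $b \in B$, so the function $\scK_{\omega_X}(\tilde\omega_X)$ is identically zero on $B$. Consequently $\ddb\, c\,\scK_{\omega_X}(\tilde\omega_X) = 0$, and Proposition \ref{PRS} yields $\eta(\omega_X) = \eta(\tilde\omega_X)$.

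There is no genuine obstacle here: all the analytic content has been absorbed into Proposition \ref{PRS} and the minimisation property of the Mabuchi functional, and what remains is the bookkeeping above. The only point worth checking carefully is that the fibrewise application is legitimate, namely that $\omega_b$ and $\tilde\omega_b$ indeed lie in the same fibre class $\alpha_b$ so that $\scK_{\omega_b}$ can be evaluated at $\tilde\omega_b$; this is immediate since both $\omega_X$ and $\tilde\omega_X$ represent the single relatively Kähler class $\alpha$. Note also that we obtain the stronger conclusion that the relevant potential vanishes identically, rather than merely that it is pluriharmonic, which is more than enough to conclude equality of the closed forms $\eta(\omega_X)$ and $\eta(\tilde\omega_X)$.
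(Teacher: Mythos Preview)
Your proof is correct and follows essentially the same approach as the paper: apply Proposition \ref{PRS} and then observe that the fibrewise Mabuchi functional $\scK_{\omega_b}(\tilde\omega_b)$ vanishes because both $\omega_b$ and $\tilde\omega_b$ are cscK. The only difference is that the paper first reduces to the case where $B$ is smooth (by density of the smooth locus, or by pulling back to a resolution) before invoking Proposition \ref{PRS}; since you show the potential vanishes identically, this technical reduction is harmless in your argument, but you may wish to mention it for completeness, as the $\ddb$-formula in Proposition \ref{PRS} is most naturally established over a smooth base.
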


\begin{proof}

The value of the fibre integral over the dense smooth locus of $B$ determines the value of the fibre integral on $B$, so we may assume $B$ is smooth; one could alternatively pull-back to a resolution of singularities. Then to show that $\Omega_{\X}) = \tilde\Omega_{\X})$, by Proposition \ref{PRS} it is enough to show that $\scK_{\omega_{\X}}(\tilde \omega_{\X}) = 0.$ But by definition $$\scK_{\omega_{\X}}(\tilde \omega_{\X})(p) = \scK_{\omega_{\X_p}}(\tilde \omega_{\X_p}) = 0$$ for each $p \in B$, since $\omega_p$ and $\tilde \omega_p$ are both cscK.\end{proof}

We next consider a fixed chart $\M_X$ of the moduli space $\M$ of cscK manifolds around a cscK manifold $(X,\alpha_X)$, such that $\M_X = W_X\git G_X$ with $W_X$ lying inside the Kuranishi space of $(X,\alpha_X)$ and $G_X = \Aut(X,\alpha_X)$. We will allow ourselves to shrink $W_X$ throughout, as this will not affect the arguments. From Section \ref{sec:deformations}, we obtain a K\"ahler metric $\Omega_X$ on $X$ defined as a fibre integral following Equation \eqref{WP2} (with positivity following from the Kuranishi theory of Section \ref{sec:deformations}). This metric $\Omega_X$ induces a K\"ahler metric on the GIT quotient $W_X\git G_X$ which we denote $\eta_X$,  constructed in the following manner (generalising the standard construction of symplectic forms on symplectic quotients to the singular setting), due to Heinzner--Huckleberry--Loose \cite{HHL}. 

Let $$\mu_X: W_X\to \mfh_X^*$$ be the associated moment map described in Equation \eqref{eqn:momentmap}, and consider an open neighbourhood $U$ of $b\in\mu_X^{-1}(0)$ on which we can write $$\ddb \phi_X = \Omega_X.$$ Then $\phi_X|_{\mu^{-1}(0)\cap U}$ is $H_X$-invariant and hence descends to the image of $U$ in $W_X\git G_X$. This induced function $\phi_X$ is strictly plurisubharmonic, and this construction induces a K\"ahler metric $\eta_X$ on $\M_X$ \cite{HHL}. The basic reason this is well-defined is as follows \cite[p.\ 139]{HHL}: if $\tilde \phi_X$ is another local potential, then $\phi_X - \tilde \phi_X$ is harmonic, so $$\phi_X - \tilde \phi_X = \Ima(f) $$ is the imaginary part of a holomorphic function $f$ which is $H_X$-invariant and hence $G_X$-invariant, so descends to a holomorphic function $f'$ on $W_X\git G_X$. Then writing $\psi_X$ and $\tilde \psi_X$ as the induced functions on $W_X\git G_X$, $$\psi_X - \tilde \psi_X = \Ima(f'),$$ meaning their difference is again harmonic.

The K\"ahler metric $\eta_X$ thus produced has continuous local potential, but satisfies further regularity properties: there is a stratification of $\M_X$, essentially by the dimension of the stabiliser, for which $\eta_X$ is actually smooth on each stratum \cite[p.\ 130]{HHL}. 

We next show that these metrics, defined on each chart, agree on intersections of charts and hence glue to a global K\"ahler metric on $\M$. There are two main points in the construction: the first is that the zero locus of the moment map $\mu_X$ described in Equation \eqref{eqn:momentmap} corresponds to cscK metrics (on the universal family); the second is the argument of Corollary \ref{cor:independence}, which uses uniqueness of cscK metrics.

\begin{theorem}\label{gluing-WP}
Suppose $\eta_X$ and $\eta_Y$ are K\"ahler metrics on charts $\M_X$ and $\M_Y$ of $\M$ induced by metrics $\Omega_X$ and $\Omega_Y$ on $W_X$ and $W_Y$. Then $$\eta_X|_{\M_X \cap \M_Y} = \eta_Y|_{\M_X \cap \M_Y},$$ qne hence the collection of $\eta_X$ induce a K\"ahler metric $\eta$ on $\M$.
\end{theorem}

\begin{proof}

Consider a point $p\in \M_X\cap \M_Y$ with $x\in W_X$ satisfying  $\mu_X(x)=0$ and $\pi_X(x) = p$, and similarly  $y\in W_Y$ satisfying $\mu_Y(y)=0$ and $\pi_Y(y)=p$. For each $p$, the form $\eta_X$ is constructed locally  writing $$\eta_X = \ddb \phi_X,$$ restricting $\phi_X$ to $\mu^{-1}_X(0)$ and defining a function on $\M_X$ whose value at $p$ is equal to $\phi_X(x)$, and similarly $\eta_Y$ is induced by a local potential $\phi_Y$ and the moment map $\mu_Y$. 

Since Kuranishi spaces are complete deformation spaces, we obtain a morphism  $$\sigma: W_X\to W_Y$$ with $\sigma(x)=y$, as in Proposition \ref{localgluing}, perhaps after shrinking $W_X$ and $W_Y$, which further pulls back the universal family over $W_Y$  to that over $W_X$. 

We argue on the universal family $\X\to W_X$, hence pull  $\omega_{\Y}$ (the relatively K\"ahler metric on the universal family over $W_Y$) back to a new relatively K\"ahler metric $p^*\omega_{\Y}$ on $\X$, so that $\sigma^*\Omega_{\Y}$ is induced as a fibre integral involving $\sigma^*\omega_{\Y}$ (by definition of such fibre integrals). It follows from Proposition \ref{PRS} that $$\Omega_{\X} - \sigma^*\Omega_{\Y} = \ddb \scK_{\omega_{\X}}(\sigma^*\omega_{\Y}).$$ Thus we may choose our local potentials such that \begin{equation}\label{difference-potentials}\phi_X - \sigma^*\phi_Y =  \scK_{\omega_{\X}}(\sigma^*\omega_{\Y}).\end{equation} Since at $\sigma(x)=y$, both $\omega_{\X}|_{\X_x}$ and $\sigma^*\omega_{\Y}|_{\X_x}$ are cscK metrics, implying the Mabuchi functional $ \scK_{\sigma^*\omega_{\X}}(\sigma^*\omega_{\Y})(x)$ vanishes. It follows that $\phi_X(x)=\phi_Y(y)$, and arguing similarly for points of $\M_X \cap \M_Y$ near $p$ completes the proof. \end{proof}

We call the K\"ahler metric just constructed the \emph{Weil--Petersson metric}. In applications of this theory, one typically has a holomorphic submersion $\pi: (\Y, \alpha_{\Y}) \to B$ and a relatively K\"ahler metric $\omega_{\X}\in \alpha_{\Y}$ which is cscK on each fibre. Thus one obtains a moduli map $\psi: B\to \M$ to the moduli space of cscK manifolds. In this situation, we next prove that the pullback of the Weil--Petersson metric to $B$ may be directly expressed as a fibre integral, which is ultimately a manifestation of the functoriality of the construction of the forms we have constructed on the base of submersions. To state the result, denote the fibre integral of interest on $B$ $$\eta_B = \int_{\Y/B}\left(\frac{\hat S}{n+1}\omega_{\X}^{n+1} - (n+1)\rho \wedge \omega_{\X}^n \right).$$ 

\begin{corollary}   We have an equality of forms $$ \psi^*\eta = \eta_B.$$ 
\end{corollary}

\begin{proof}

The proof is essentially the same as that of Theorem  \ref{gluing-WP}. As the statement is a local one, it is enough to prove the result on an open set $U \subset B$ mapping to a single chart $\M_X$ of $\M$, and we fix points $p\in \M_X$ and $x\in W_X$ satisfying $\pi_X(x)=p$ and $\mu_X(x)=0$, along with $b\in U$ satisfying $\psi(b)=p$. By Kuranishi theory, we obtain a morphism $\sigma: U\to W_X$ such that $\Y$ is the pullback of the universal family $\X$ over $W_X$ with $\sigma(b)=x$, so that the morphism $U \to \M_X$ factoring as $$U \xrightarrow{\sigma} W_X \to \M_X.$$ 

We thus obtain two relatively K\"ahler metrics on $\Y$ (or more precisely its restriction to $U$), $\omega_{\Y}$ on $\Y$ (from the setup) and $\sigma^* \omega_{\X}$ on $\X$ (from Theorem \ref{ortu}), and the difference of the associated forms $\eta_B$ and $\sigma^*\Omega_X$ on $B$ is given by the fibrewise Mabuchi functional: $$\eta_B - \sigma^*\Omega_X = \ddb \scK_{\omega_{\Y}}(\sigma^*\omega_{\X}).$$ We choose local potentials $\phi_{\Y}$ and $\sigma^*\phi_{\X}$ for $\eta_B$ and $\sigma^*\Omega_X$ near $b$ and $x$ respectively such that $$\phi_{\Y} - \sigma^*\phi_{\X} = \ddb \scK_{\omega_{\Y}}(\sigma^* \omega_{\X}).$$ Then since both fibre metrics are cscK, the potentials agree at the point. Arguing in the same manner for nearby points completes the proof.\end{proof}

This also implies that the pullback $p^*\eta$ is smooth. The smoothness statement can be thought of as reflecting the idea that the singularities of the Weil--Petersson metric on $\M$ are precisely caused the lack of a universal family over $\M$. Another consequence is the following.

\begin{corollary} The Weil--Petersson metric $\eta$ on $\M$ agrees with the classical Weil--Petersson metric constructed by Fujiki--Schumacher on the moduli space of cscK manifolds with discrete automorphism group. \end{corollary}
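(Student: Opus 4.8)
The plan is to restrict to the open locus $\M^{\circ} \subset \M$ parametrising cscK manifolds with discrete automorphism group, and to compare the two metrics there. On this locus the reductive group $G = \Aut(X,\alpha)$ is finite for each represented manifold, so the local quotient of the Kuranishi space is merely a finite-group quotient; in particular the Kuranishi space is universal (as in the proof of Theorem \ref{separated-moduli} $(iii)$), and one has, locally on the Kuranishi space, a genuine universal family of cscK manifolds. This is precisely the setting in which Fujiki-Schumacher \cite{FS} construct their moduli space, so that $\M^{\circ}$ agrees with theirs as a complex space, and it remains only to identify the two Weil-Petersson forms.

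First I would recall the two definitions on a small open $U \subset \M^{\circ}$. Our form $\eta|_U$ is given by the fibre integral of Equation \eqref{initial-WP-formula}, computed with respect to a relatively K\"ahler metric $\omega_U$ on $(\X_U,\alpha_U)$ which restricts to a cscK metric on each fibre; such a metric exists by the argument of Proposition \ref{stratum-wise-smooth}. On the other hand, the Fujiki-Schumacher metric is defined as the $L^2$-inner product of the harmonic representatives of the Kodaira-Spencer classes of the family, taken with respect to the fibrewise cscK metrics. The content of the corollary is that these two expressions agree.

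The key step is the identification of the fibre integral with this $L^2$-inner product, which is exactly the comparison recorded in the Remark following the proof of Proposition \ref{stratum-wise-smooth}: the pair $(\X_U,\omega_U)$ is a metrically polarised family with admissible form in the sense of \cite{FS}, and by \cite[Theorem 7.8]{FS} the fibre integral $\eta_U$ coincides with the $L^2$-inner product of the harmonic forms representing the Kodaira-Spencer classes, as described in \cite[Proposition 2.3, Definition 4.1, Lemma 4.9]{FS}. Since both constructions are independent of the choice of fibrewise cscK metric---for our $\eta$ this is Corollary \ref{cor:independence}, and for the Fujiki-Schumacher metric it is built into their definition---the two forms on $U$ coincide, and hence $\eta = \eta_{FS}$ on all of $\M^{\circ}$ after gluing.

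The main obstacle, and the one place where genuine care is required, is verifying that our local data fits the framework of \cite{FS} exactly: one must check that the relatively cscK metric $\omega_U$ produced in Proposition \ref{stratum-wise-smooth} is an admissible form in their precise sense, that the normalisation constants appearing in Equation \eqref{initial-WP-formula} match those of their fibre-integral formula, and that the harmonic theory they use for the Kodaira-Spencer classes is computed with respect to the same fibrewise cscK metrics. All of this is essentially bookkeeping once the universal family and the fibrewise cscK metrics are in place, but the matching of conventions between the two fibre-integral formulas is the only non-formal point, and it is precisely what \cite[Theorem 7.8]{FS} supplies.
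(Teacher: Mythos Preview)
Your proposal is correct and takes essentially the same approach as the paper: the key input is \cite[Theorem 7.8]{FS}, which identifies Fujiki--Schumacher's Weil--Petersson metric with the fibre integral of Equation \eqref{initial-WP-formula}, so the two constructions agree. The paper's proof is a one-line appeal to this result, whereas you spell out the intermediate $L^2$-inner-product description and the surrounding setup, but the logic is identical.
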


\begin{proof} By \cite[Theorem 7.8]{FS}, Fujiki--Schumacher's Weil--Petersson metric on the moduli space of cscK manifolds with discrete automorphism group can be constructed by using  such fibre integrals,  hence the two constructions agree.\end{proof}

\subsection{The CM line bundle}\label{sec:CM}

We now turn to the projective setting, so that we consider smooth projective varieties together with ample line bundles which  admit a cscK metric. We also consider a fixed connected component of the moduli space $\M$ of cscK manifolds. Our aim is to construct a natural line bundle over $\M$. To do this, we begin with the construction of line bundles on the base of families of polarised manifolds.

Consider a flat family of smooth polarised varieties $\pi:(\X,\H_{\X})\to B$ over a complex space $B$, so that $\H_{\X}$ is relatively ample, and suppose the dimension of each fibre is $n$. By properness, the pushforwards $\pi_*\H_{\X}^k$ are a torsion-free coherent sheaves over $B$ and hence $\det \pi_* \H_{\X}^k$ gives a sequence of line bundles on $B$ for all $k \geq 0$. The Knudsen-Mumford expansion \cite[Theorem 4]{KM} states that for $k \gg 0$, there is an expansion $$\det \pi_* \H_{\X}^k = \lambda_{n+1}^{k \choose n+1}\otimes  \lambda_{n}^{k \choose n}\otimes \hdots \otimes \lambda_0,$$ for $\lambda_j \to B, j=0,\hdots n+1$ a set of line bundles on $B$ independent of $k$. As noted by Fujiki--Schumacher, the Knudsen--Mumford theory applies also to the case that $B$ is a complex space \cite[p.\ 163]{FS}.

\begin{definition} We define the \emph{CM line bundle} on $B$ to be $$\L_{\CM,B}= \lambda_{n+1}^{\hat S + n(n+1)} \otimes \lambda_n^{-2(n+1)},$$ where $\hat S$ is the fibrewise average scalar curvature.
\end{definition}

The CM line bundle, which is a $\Q$-line bundle, was first introduced by Fujiki--Schumacher \cite{FS}, then rediscovered by Tian \cite{GT}. The construction of the CM line bundle is functorial, in the following manner: if $\phi: B' \to B$ is a morphism, $(\X',\H_{\X'}) = (\X,\H_{\X})\times_B B'$ is the pullback family and $\L_{\CM,\X'}$ is the CM line bundle associated to $(\X',\L') \to B'$, then we have $\phi^*\L_{\CM,\X} = \L_{\CM,\X'}$. An important property of the CM line bundle, which we will not use, is that its first Chern class satisfies \begin{equation}\label{CMclass}c_1(\L_{\CM,B}) = \pi_*[\hat S c_1(\H_{\X})^{n+1}+ (n+1)c_1(K_{\X / B}).c_1(\H_{\X})^{n}],\end{equation} as proven by Fujiki--Schumacher \cite[Proposition 10.2]{FS}. This should be compared to the fibre integral formula given in Equation \ref{initial-WP-formula}.

We now return to a chart $\pi_X: W_X \to \M_X$ of the moduli space of cscK polarised manifolds, with $B_X \subset W_X$ the Kuranishi space. As the arguments in the present section are more sensitive to working on $B_X$ rather than $W_X$, we will be precise about which is being used. There is a universal family $(\X,\H_{\X}) \to B_X$, producing the CM line bundle which we denote $$\L_{\CM,X} \to B_X.$$

The orbits of the $G_X= \Aut(X,L)$-action on $B_X$ are all isomorphic polarised varieties, hence for all $g \in G_X $ we have $g^*(\X,\H_{\X}) \cong (\X,\H_{\X})$. It follows that the CM line bundle $\L_{\CM,X} \to B_X$ is a $G_X$-equivariant line bundle, and in particular its transition functions are $G_X$-invariant. The transition functions therefore extend naturally to $W_X$, hence we may extend the CM line bundle to $\L_{\CM,X} \to W_X$ in a canonical manner. From another perspective, using the $G_X$-action we may extend the universal family to a family over $W_X = G_X.B$ and take the associated CM line bundle over $W_X$.

Let $\F_{G_X}$ denote the sheaf on $\M_X$ consisting of $G_X$-invariant sections of $\L_{\CM,X}$: $$\F_{G_X}(U) = (\O(\L_{\CM,X})(\pi_X^{-1}(U)))^{G_X}.$$ By Fujiki \cite{AF}, the K\"ahler metric $\Omega_X$ on $W_X$ is the curvature of a Hermitian metric on $\L_{\CM,X} \to W_X$. Using this, the following result of Sjamaar, proved in the general setting of analytic GIT, then shows that $\F_{G_X}$ is associated to a $\Q$-line bundle.

\begin{theorem}[Sjamaar]\cite[Section 2.2]{RS}\label{sjam} $\F_{G_X}$ is the sheaf of sections of a $\Q$-line bundle $\L_X\to \M_X$. The integer $r_X$ such that $\L^{\otimes r_X}_X$ is a line bundle is given by the number of connected components of $G_X$. Moreover, there is a natural $G_X$-equivariant isomorphism $\pi^*\L_X^{\otimes r_X} \cong \L_{\CM,X}^{\otimes r_X}$.
\end{theorem}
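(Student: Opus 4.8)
The plan is to recognise the statement as an instance of descent for prequantum line bundles along a K\"ahler quotient, and to deduce the three assertions from Sjamaar's theorem after verifying its hypotheses in our setting. The geometric input, recorded just above the statement, is that the K\"ahler form $\Omega_X$ on $W_X$ is the curvature of a Hermitian metric $h$ on the $G_X$-equivariant line bundle $\L_{CM}$. Fixing a maximal compact subgroup $K \subset G_X$ with moment map $\mu$, the fact that $\Omega_X$ is the curvature of $h$ endows $(\L_{CM},h)$ with the structure of a prequantum line bundle, so that the infinitesimal action of $\xi \in \mfk$ on sections is given by Kostant's formula $\nabla_{\xi_{W_X}} + 2\pi\sqrt{-1}\langle\mu,\xi\rangle$. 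Together with the identification $\M_X = \mu^{-1}(0)/K = W_X\git G_X$ from analytic GIT, this places us exactly in the framework of Sjamaar's descent.

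First I would analyse how the stabiliser of a polystable point acts on the corresponding CM fibre. Let $w \in \mu^{-1}(0)$ be polystable with reductive stabiliser $G_w$ and maximal compact $K_w$. For $\xi \in \mfk_w$ the fundamental vector field $\xi_{W_X}$ vanishes at $w$, so Kostant's formula shows that $\xi$ acts on $(\L_{CM})_w$ by the scalar $2\pi\sqrt{-1}\langle\mu(w),\xi\rangle = 0$. Hence $G_w^0 = (K_w^0)^{\C}$ acts trivially on the fibre, and $G_w$ acts only through its finite component group $\pi_0(G_w)$, via a character $\chi_w$ valued in roots of unity. By Kempf's descent lemma in the analytic GIT setting, $\L_{CM}^{\otimes m}$ descends to a genuine line bundle on $\M_X$ exactly when $\chi_w^{\otimes m}$ is trivial for every polystable $w$. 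At the base point $0$, which is fixed by all of $G_X$, the relevant character is a character of $\pi_0(G_X)$ and so has order dividing $r_X = |\pi_0(G_X)|$; Sjamaar's analysis identifies this contribution as governing the denominator, so that $\L_{CM}^{\otimes r_X}$ descends and defines a $\Q$-line bundle $\L_X$ with $\L_X^{\otimes r_X}$ a genuine line bundle.

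The remaining two claims are then formal. The descent datum produced above is, by construction, a $G_X$-equivariant isomorphism $\pi_X^*\L_X^{\otimes r_X}\cong \L_{CM}^{\otimes r_X}$, which is the final assertion. Pulling back along $\pi_X$ and taking $G_X$-invariants gives, for each open $U \subset \M_X$, a natural bijection between sections of $\L_X$ over $U$ and $G_X$-invariant sections of $\L_{CM}$ over $\pi_X^{-1}(U)$; this identifies $\F_{G_X}$ with the sheaf of sections of $\L_X$, as required.

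I expect the main obstacle to be the careful verification that Sjamaar's machinery, developed for Hamiltonian actions on K\"ahler manifolds, applies to our possibly singular reduced Stein space $W_X$: one must confirm that the prequantum structure, Kostant's formula, and the quotient identification $\mu^{-1}(0)/K\cong W_X\git G_X$ all persist in the analytic category supplied by Heinzner-Loose. A secondary subtlety is confirming that the global denominator is exactly $r_X$ rather than some larger common multiple of the orders of the characters $\chi_w$; this is precisely the content of the computation in Sjamaar's Section 2.2, and amounts to showing that the stabiliser at the base point $G_X$ controls the descent.
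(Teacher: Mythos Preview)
The paper does not prove this theorem: it is quoted as a black-box result of Sjamaar, with a citation to \cite[Section 2.2]{RS}, and the only additional remark is the observation (immediately following the statement) that Sjamaar also realises $\L_X^{\otimes r_X}$ as the analytic GIT quotient $\L_{CM}^{\otimes r_X}\git G_X$. So there is no ``paper's own proof'' to compare against; your proposal is in effect a sketch of what happens inside the cited reference rather than of anything the authors do.

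That said, your outline does capture the standard mechanism behind such descent results: the prequantum/Kostant formula forces the identity component of each stabiliser to act trivially on the fibre over a zero of the moment map, so the obstruction to descent is a character of the component group, and raising to a suitable power kills it. Two points deserve care. First, the argument that $G_w^0$ acts trivially on $(\L_{CM})_w$ via Kostant's formula only handles the \emph{identity component of the stabiliser}; you still need that every element of $G_w\cap G_X^0$ acts trivially, since such elements need not lie in $G_w^0$ (think of a finite cyclic subgroup of a torus). This is what makes the denominator exactly $r_X=|\pi_0(G_X)|$ rather than the lcm of the $|\pi_0(G_w)|$, and it requires an argument about how the connected group $G_X^0$ acts on the total space of the prequantum bundle, not just a fibrewise computation. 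You flag this as a ``secondary subtlety'', but it is actually the heart of the $r_X$ claim. Second, your caveat about transporting Sjamaar's results to the singular reduced Stein space $W_X$ is well placed; this is precisely why the authors invoke the reference rather than reproduce the argument.
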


Sjamaar also proves that $\L_X$ is itself the analytic GIT quotient \begin{equation}\label{eqgit} \L_X^{\otimes r_X} = \L^{\otimes r_X}_{\CM}\git G_X,\end{equation} where it is crucial that every point in $W_X$ is semistable \cite[Proposition 2.15]{RS}.   

It follows that locally, the CM line bundle descends to a line bundle $\L_X$ on the chart $\M_X$ of the moduli space $\M$. Here we globalise this construction to produce a line bundle $\L$ on $\M$.

\begin{theorem} \label{CM-thm}

The moduli space $\M$ of projective cscK manifolds admits a $\Q$-line bundle $\L$ such that if $(\X,\H_{\X})\to B$ is any family of cscK manifolds, then the CM line bundle $\L_{\CM,\X} \to B$ is isomorphic to the pullback $\psi^*\L$ through the moduli map $\psi: B \to \M$. 

\end{theorem}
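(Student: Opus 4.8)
The plan is to globalise the local construction of the $\Q$-line bundle $\L_X \to \M_X$ carried out via Sjamaar's theorem, using the gluing of charts already established in the construction of $\M$. Recall that each point of $\M$ lies in some chart $\pi_X\colon W_X \to \M_X = W_X \git G_X$, that by Theorem \ref{sjam} the sheaf $\F_{G_X}$ of $G_X$-invariant sections of $\L_{CM,X} \to W_X$ is the sheaf of sections of a $\Q$-line bundle $\L_X \to \M_X$, and that by Proposition \ref{localgluing} the charts $\M_X$ are canonically isomorphic on overlaps. First I would verify that these local $\Q$-line bundles $\L_X$ patch to a global $\Q$-line bundle $\L \to \M$. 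The essential point is that on an overlap $\scN_X \cap \scN_Y$, the canonical isomorphism $\varphi_X \circ \varphi_Y^{-1}$ of Proposition \ref{localgluing} arises from the maps $\nu\colon W_X \to W_Y$ and $\xi\colon W_Y \to W_X$ between Kuranishi spaces under which the universal families pull back to one another. By the functoriality of the CM line bundle recalled above — namely $\phi^*\L_{CM} = \L_{CM}'$ for any base change $\phi$ — these maps identify the CM line bundles $\L_{CM,X}$ and $\L_{CM,Y}$ over the relevant loci, hence (taking $G$-invariant sections and applying Theorem \ref{sjam}) identify $\L_X$ and $\L_Y$ compatibly over the overlap. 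One must check the cocycle condition on triple overlaps, which again follows from functoriality of $\L_{CM}$ together with the cocycle compatibility of the gluing maps $\varphi_X \circ \varphi_Y^{-1}$ used to build $\M$ as a complex space.

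\textbf{The second half establishes the universal property:} for an arbitrary family $(\X,\H) \to B$ of cscK manifolds, the CM line bundle $\H_{CM} \to B$ is isomorphic to $p^*\L$, where $p\colon B \to \M$ is the moduli map. Since this is a local statement on $B$, I would cover $B$ by opens on each of which the family factors through a fixed Kuranishi chart: by completeness of the Kuranishi space there is a map $b\colon U \to W_X$ so that $(\X,\H)|_U$ is the pullback of the universal family over $W_X$, and $p|_U = \pi_X \circ b$ up to the chart identification. Functoriality of the CM line bundle then gives $\H_{CM}|_U \cong b^*\L_{CM,X}$, while the natural isomorphism $\pi_X^*\L_X^{\otimes r_X} \cong \L_{CM,X}^{\otimes r_X}$ of Theorem \ref{sjam} gives $b^*\L_{CM,X} \cong b^*\pi_X^*\L_X = p^*\L$ at the level of $\Q$-line bundles. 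Patching these local isomorphisms, using that they are determined by the canonical chart identifications, yields the global isomorphism $\H_{CM} \cong p^*\L$.

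\textbf{The main obstacle} I anticipate is bookkeeping the equivariance and the rational structure carefully. Two subtleties deserve attention. First, $\L_X$ is only a $\Q$-line bundle, with $\L_X^{\otimes r_X}$ an honest line bundle where $r_X$ counts the connected components of $G_X = \Aut(X,\alpha_X)$; since this integer varies from chart to chart, the gluing and the universal property must be phrased at the level of $\Q$-line bundles, passing to a common power on overlaps, and one should confirm that the identifications of Theorem \ref{sjam} respect the $\Q$-structure. Second, as in the construction of $\M$, the charts $W_X$ carry only a local $G_X$-action rather than a genuine group action, so the extension of $\L_{CM}$ from $B_X$ to $W_X$ via $G_X$-invariant transition functions, and the subsequent descent, must be handled exactly as in the gluing argument of Proposition \ref{localgluing}; the key inputs are precisely the $G_X$-invariance of the transition functions (from $g^*(\X,\H) \cong (\X,\H)$) and the naturality of the chart isomorphisms, both already in hand. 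Beyond this, the argument is a formal consequence of functoriality of the CM line bundle combined with the gluing and completeness properties of the Kuranishi charts, so I expect no genuinely new analytic or geometric difficulty.
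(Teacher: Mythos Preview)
Your approach is essentially the paper's: local descent via Sjamaar, gluing via functoriality of the CM line bundle under the Kuranishi-space maps of Proposition \ref{localgluing}, and the universal property via completeness. The one substantive gap is in your handling of the integers $r_X$. You correctly flag that $r_X$ varies from chart to chart, but ``passing to a common power on overlaps'' is not enough: to obtain a genuine $\Q$-line bundle on $\M$ you need a \emph{single} integer $r$ such that $\L^{\otimes r}$ is an honest line bundle globally, not merely a compatible system of local powers. The paper supplies this bound via Matsusaka's big theorem: for smooth polarised varieties of fixed Hilbert polynomial there is a uniform $k$ with $L^{\otimes k}$ very ample, so each $\Aut(X,L)$ embeds in a fixed $\GL(H^0(X,L^{\otimes k})^*)$ and hence has a uniformly bounded number of connected components. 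This yields one $r$ valid across the whole connected component of $\M$, after which one may work with genuine line bundles throughout.

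With that ingredient in place your sketch goes through. The paper carries out the cocycle verification more explicitly than you indicate---choosing polystable representatives in the fibres of $\pi_X,\pi_Y$, using the $G_X$-equivariant isomorphism $\pi_X^*\L_X \cong \L_{CM,X}$ from Sjamaar, and checking that the descended maps $\alpha_{XY}$ satisfy $\alpha_{YX}\circ\alpha_{XY}=\id$ and the triple-overlap identity---and also verifies independence of the chosen charts; but the underlying mechanism is exactly the functoriality you invoke.
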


\begin{proof}
We begin by constructing $\L$. From the above, on each chart $\M_X$ we obtain a $\Q$-line bundle $\L_X$ such that $\L_X^{r_X}$ is a line bundle. The integer $r_X$ is bounded in the following manner, as in Fujiki--Schumacher \cite[Lemma 11.8]{FS}. By Matsusaka's big theorem, there is a $k \gg 0$ such that for all smooth polarised varieties $(X,L)$ of fixed Hilbert polynomial, the line bundle $L^{\otimes k}$ is very ample. Thus $\Aut(X,L) \subset GL(H^0(X,L^{\otimes k})^*)$ for all $(X,L)$ with fixed Hilbert polynomial, and hence the number of connected components of $\Aut(X,L)$ is uniformly bounded. Our moduli space $\M$ parametrises polarised varieties with fixed Hilbert polynomial, as this is constant in flat families. From Theorem \ref{sjam},  this produces an integer $r$ such that $\L_X^{\otimes r}$ is a line bundle for all $[X] \in \M$. For notational convenience, we assume $r=1$, as this does not affect our argument.

The construction of $\M$ involves a collection of charts $\M_X$ together with biholomorphisms $\varphi_{XY}: \M_X \to \M_Y$ when $U_{XY} = \M_X\cap\M_Y\neq \varnothing$. We begin by constructing an isomorphism $\alpha_{XY}: \L_X \cong \L_Y$ valid on $U_{XY}$. 

Consider a point $p \in U_{XY}$ with $p = \pi_X(x) = \pi_Y(y)$, where $\pi_X: W_X \to \M_X$, $\pi_Y: W_Y \to \M_Y$ are the natural quotient maps. From Theorem \ref{sjam}, we obtain a $G_X$-equivariant isomorphism $\pi_X^*\L_X \cong \L_{\CM,X}$. 

There is a morphism $\gamma_{XY}: V_X \to V_Y\subset W_Y$, defined in a neighbourhood $V_X$ of $x$, such that the pullback of the universal family over $V_Y$ is isomorphic to the universal family over $V_X$. Over $\gamma_{XY}^{-1}(V_Y)$, the pullback of the CM line bundle $\L_{\CM,Y} \to V_Y$ is canonically isomorphic to the CM line bundle $\L_{\CM,X} \to \gamma_{XY}^{-1}(V_Y) \subset V_X$, namely $$\L_{\CM,X} \cong \gamma_{XY}^*\L_{\CM,Y}.$$ In particular, we obtain bundle morphisms (of bundles over different complex spaces) $$\L_{\CM,X} \to \gamma_{XY}^*\L_{\CM,Y} \to \L_{\CM,Y} \to \L_{Y}, $$ where the last morphism arises from the fact that the GIT quotient $\L_{\CM,Y} \git G_Y$ equals $\L_Y$. 

We show that the composition $\L_{\CM,X} \to \L_Y$ is $G_X$-invariant, from which it will follow that there is a bundle morphism $\alpha_{XY}: \L_X \to \L_Y$, using that $\L_{\CM,X}\git G_X = \L_X$. One way to see this is from the $G_X$-invariant isomorphism $\pi_X^*\L_X \cong \L_{\CM,X}$. Using this isomorphism, it is enough to show that the induced morphism $\pi_X^*\L_X\to \L_Y$ is $G_X$-invariant. But the $G_X$-action on the pullback bundle $\pi_X^*\L_X$ is induced from the $G_X$-action on $V_X$ itself, so the required $G_X$-invariance follows from that of $V_X \to \M_Y$, which is the content of Proposition \ref{localgluing}. For example, on each fibre of $\pi_X$ the pullback $\pi^*\L_X|_{\pi^{-1}(p)} \cong \pi^{-1}(p)\times \C$ is trivial, and the $G_X$-action on $\pi^*\L_X|_{\pi^{-1}(p)}$ is just given by $g.(x,l) = (g.x,l)$ for $x \in \pi^{-1}(p)$.

Thus for each $p$, we obtain a bundle morphism $\alpha_{XY}: \L_X \to \L_Y$ defined in a neighbourhood of $z$. But these morphisms agree on intersections, giving a morphism $\alpha_{XY}: \L_X \to \L_Y$ defined on $U_{XY}$. This bundle morphism must be an isomorphism, since $\L_{\CM,X} \to \gamma_{XY}^*\L_{\CM,Y}$ is a bundle isomorphism.

Let $\xi_X: \L_X \to \M_X\times\C$ and $\xi_Y: \L_Y \to \M_Y\times\C$ be trivialisations. Then \begin{equation}\label{transeqn}\xi_X \circ \alpha^{-1}_{XY}\circ \xi_Y^{-1}: U_{XY}\times\C \to U_{XY}\times\C\end{equation} can be viewed as a function $ \psi_{XY}\in \O^*(U_{XY})$ in the usual manner. We show that the collection of $\psi_{XY}$ define an element of $H^1(\M,\O^*)$, and hence produce a line bundle. 

We first show that $\psi_{XY} = \psi_{YX}^{-1}$. As above, we fix $p = \pi_X(x') = \pi_Y(y')$, and a map $\gamma_{YX}: V'_Y \to V'_X\subset W_X$, defined now in a neighbourhood $V'_Y$ of $y'$ which may be different from $V_Y$. The induced morphism $\L_{\CM,Y} \to \L_Y$ is $G_Y$-invariant as above, and for our purposes it is sufficient to show that $\alpha_{YX} = \alpha_{XY}^{-1}$. We show this at the point $z$ itself, as the argument is the same for any point in a neighbourhood of $z$. We may choose $x,x',y,y'$ in the preimage of $[$ to have closed orbits. By Proposition \ref{localgluing}, we know that $x \in G_X.z'_X$ and $y \in G_Y.y'$. By equivariance of the objects involved, we may therefore assume that $x = x', y = y'$. We consider the open neighbourhood $V_Y \cap (\gamma_{XY} \circ \gamma_{YX})^{-1}(V_Y)$ of $y$, where we have two \emph{isomorphic} universal families. By functoriality of the CM line bundle, for each of these universal families, the CM line bundle is actually the same. It follows that $\L_{\CM,Y} \to (\gamma_{XY} \circ \gamma_{YX})^*\L_{\CM,Y}$ descends to the identity map $id = \alpha_{YX} \circ \alpha_{XY}^{-1}: \L_Y \to \L_Y$, which implies  $\psi_{XY} = \psi_{YX}^{-1}$ as desired. The argument to show $\psi_{XY} \psi_{YZ} \psi_{ZX} = id$ where defined is identical, with slightly more laborious notation. This, therefore, constructs a $\Q$-line bundle $\L \to \M$.

We next show that this construction is independent of chosen charts. By passing to a common refinement one can assume that the charts are actually equal, with transition functions $\psi_{XY}$ and $\psi_{XY}'$ defined on these charts. The argument is essentially the same as that of the previous paragraph. Suppose $\pi_X: W_X \to \M_X$ and $\pi_{X'}: W_{X'} \to \M_X$ are the two local charts. Then, around any $x \in W_X$, one obtains local maps $W_X \to W_{X'}$ under which the CM line bundle on $W_{X'}$ pulls back to a bundle canonically isomorphic to the CM line bundle on $W_X$. Then the previous paragraph shows that the transition functions constructed can be taken to be the same, showing that the construction is indeed independent of chart.
 
Finally, suppose that $(\X,\H_{\X})\to B$ is a family of cscK manifolds. We show that the induced CM line bundle which we denote $\L_{\CM,\X}\to B$ is isomorphic to the pullback $\psi^\L$ through the moduli map $\psi: B \to \M$. The bundle $\L$ is defined through its transition functions, and the transition functions of $\psi^*\L$ are just the pullback of these transition functions. Cover $B$ by open sets $U_{\X_b}$ such that each $U_{\X_b}$ is mapped to the Kuranishi space for the fibre $(\X_b,\H_{\X_b})$. We then have morphisms $$U_{\X_b} \to W_X \to \M_X$$ which is just the definition of the moduli map. By functoriality of the CM line bundle, through these morphisms we obtain a canonical isomorphism $\L_{\CM,\X}|_{U_{\X_b}} \cong \psi^*\L_{\CM}|_{U_{\X_b}}.$ As above, these local isomorphisms glue to a global isomorphism $\L_{\CM,\X} \cong \psi^*\L$ by the same transition function argument. \end{proof}

So far we have constructed a K\"ahler metric $\eta$ on $\M$ and a line bundle $\L\to \M$. As these are both canonical objects, it is natural to ask how the two are related. Our next result shows that the Weil--Petersson metric is actually a Hodge metric, arising as the curvature of a Hermitian metric on $\L$.

\begin{theorem}\label{CM-metric} There is a Hermitian metric on $\L_{\CM}$ whose curvature is the Weil--Petersson metric.  \end{theorem}

\begin{proof} On a given chart $(\X,\H_{\X})\to W_X)$ with induced CM line bundle $\L_{\CM,\X}$, the form $\eta_X$, constructed as a fibre integral on $W_X$, is the curvature of a natural Hermitian metric on $\L_{\CM,X}$. Indeed, the CM line bundle can be viewed as a Deligne pairing $$\L_{\CM,X} = \langle \H_{\X}, \hdots, \H_{\X}\rangle^{\otimes \hat S} \otimes\langle K_{\X/W_X}, \H_{\X},\hdots,\H_{\X}\rangle^{\otimes n+1}.$$ Taking $h_{\X}$ to be a Hermitian metric with curvature $\omega_{\X}$, we obtain a Hermitian metric on the relative canonical class $K_{\X/W_X}$, hence we obtain a Deligne metric $h_{\CM,X}$ on $\L_{\CM,X}.$  We refer to \cite[Section 7]{PRS} for details on the Deligne pairing and Deligne metrics. Thus on each chart, the CM line bundle descends to a line bundle on $\M_X$ with an induced continuous Hermitian metric, whose curvature is the Weil--Petersson form. What we must show is that this Hermitian metric is well-defined, independently of the chosen chart.

We thus consider a trivialising section $s$ of the CM line bundle $\L$ on $\M$ on an open set lying in the intersection $\M_X\cap \M_Y$ of two charts. Then by construction of $\L$, the section corresponds to $G_X$ and $G_Y$-invariant sections $s_X$ and $s_Y$ respectively of $\L_{\CM,X}$ and $\L_{\CM,Y}$, and the value $|s|^2_{h(p)}$ of the Hermitian metric $h$ on $\L$  is computed as $$|s|^2_{h}(p) = |s|_{h_{\CM,X}}^2(x),$$ where $x$ is a zero of the moment map in $W_X$ mapping to $p$. We must show this agrees with the corresponding value computed for $s_Y$, for which we argue as in Theorem \ref{gluing-WP}. Through the induced morphism $\sigma: W_X\to W_Y$, by functoriality of the CM line bundle, we obtain two Hermitian metrics $h_{\CM,X}$ and $\sigma^*h_{\CM,Y}$ on $\L_{\CM,X}$. The Deligne metric satisfies the property that \cite[Equation (7.5)]{PRS} $$h_{\CM,X} = \sigma^*h_{\CM,Y}e^{\scK_{\omega_{\X}}(\sigma^*\omega_{\Y})}$$ with $\omega_{\X}, \omega_{\Y}$ the curvatures of the metrics $h_{\X}, \sigma^*h_{\Y}$ on the respective universal families, which is the crucial general property of Deligne metrics, and so as in Theorem \ref{gluing-WP} since the values $|s|^2_{h}(p)$ are computed at zeroes of the moment maps, the two values agree.

We note that the metric thus constructed is continuous, parallel to the fact that the Weil--Petersson metric is constructed to have continuous potential.\end{proof}

\begin{remark}A slightly weaker statement is that if $(\X,\L)\to B$ is any family of cscK manifolds, with moduli map $\psi: B \to \M$ , then $\psi^*\eta$ is the curvature of a Hermitian metric on the CM line bunde $\L_{\CM,\X}$ on $B$. By our results, both of these pullbacks can be constructed directly on $B$, without using the moduli map. This is then actually easier to prove, and follows from a comparison of the fibre integral formula of Equation \ref{initial-WP-formula} and Equation \eqref{CMclass}. \end{remark}

\begin{remark}

In the projective setting, the results of Sections \ref{sec-kahler2} can be phrased using Deligne metrics and their properties (such as the formula for their curvature), as is essentially a consequence of the argument of Theorem \ref{CM-metric}. Similarly, viewing the CM line bundle as induced from a Deligne pairing, the functoriality properties used in the proof of Theorem \ref{CM-thm} can also be viewed as functoriality properties of Deligne pairings.

\end{remark}

\begin{corollary}\label{kodaira} Every compact complex subspace of $\M$ is projective. \end{corollary}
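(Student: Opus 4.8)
The plan is to read this off from the two structures already placed on $\M$: the Weil-Petersson K\"ahler metric $\eta$ and, in the projective case under consideration, the $\Q$-line bundle $\L$ with $\eta \in c_1(\L)$. First I would fix a compact complex subspace $\iota: Z \hookrightarrow \M$. Since $\eta$ is a K\"ahler metric on the reduced complex space $\M$ in the sense of the Notation section, its restriction $\iota^*\eta$ is again a K\"ahler metric on $Z$: the defining local potentials pull back to strictly plurisubharmonic functions whose differences stay pluriharmonic, exactly as in the discussion of restricting K\"ahler metrics to complex subspaces. Pulling back $\L$ as well and using functoriality of the first Chern class, I would obtain $\iota^*\eta \in c_1(\iota^*\L)$ in $H^2(Z,\Q)$.

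Next I would upgrade this to genuine positivity of an honest line bundle. Recall from the construction of $\L$ that there is a uniform integer $r$, bounded via Matsusaka's big theorem, such that $\L^{\otimes r}$ is a line bundle on $\M$; hence $\iota^*\L^{\otimes r}$ is a holomorphic line bundle on $Z$ whose first Chern class equals $r[\iota^*\eta]$. As $\iota^*\eta$ is a K\"ahler metric, this exhibits $\iota^*\L^{\otimes r}$ as a \emph{positive} line bundle on the compact complex space $Z$; equivalently, $Z$ carries a rational (after scaling, integral) K\"ahler class, so that $\iota^*\eta$ is a Hodge metric.

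Finally I would invoke the extension of the Kodaira embedding theorem to compact complex spaces: a reduced compact complex space carrying a positive line bundle is projective (equivalently, a Moishezon K\"ahler space is projective). Applied to $(Z,\iota^*\L^{\otimes r})$ this yields projectivity of $Z$. The hard part, such as it is, is not the construction but the invocation: since $Z$ may be singular one cannot quote Kodaira's theorem for manifolds verbatim, and must instead appeal to its singular analogue, i.e. Grauert's criterion that a positive line bundle on a complex space is ample. What makes this applicable is precisely that $\iota^*\eta$ is a \emph{bona fide} K\"ahler metric on $Z$ in the complex-space sense rather than merely a positive closed current, which is guaranteed by the stratum-wise smoothness and the cross-strata continuity of the potential for $\eta$ established above.
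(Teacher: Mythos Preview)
Your proposal is correct and follows essentially the same route as the paper: restrict $\eta$ and $\L$ to the compact subspace to obtain a Hodge metric, then invoke Grauert's singular analogue of the Kodaira embedding theorem. The paper likewise notes the subtlety you highlight, namely that the notion of K\"ahler metric in use must match the one required for Grauert's theorem, citing Sjamaar for this compatibility.
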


\begin{proof}

Let $B \subset \M$ be such a subspace. We have shown that $B$ admits a K\"ahler metric $\eta|_B \in c_1(\L)$. If $B$ and $\eta|_B$ were smooth, it would follow by the Kodaira embedding theorem that $B$ is projective. In the singular setting we can appeal to Grauert's embedding theorem for complex spaces \cite[Section 3]{HG}, which again proves projectivity.  Note that the definition of a K\"ahler metric we are using implies the notion used by Grauert, as demonstrated by Sjamaar \cite[Theorem 2.17]{RS}.\end{proof}

\begin{example}\label{example} It is not difficult to construct families of cscK manifolds that are parametrised by a compact complex space. A classical example is given by families of Riemann surfaces of genus at least two, fibred over another Riemann surface of genus at least two, as considered by Fine \cite{JF} in a context related to ours. 

Another example is as follows. Consider the moduli space of (slope) stable vector bundles over a compact Riemann surface endowed with an ample line bundle $(S,L)$ of genus at least two. When degree and rank are coprime, stability coincides with semistability and the moduli space is automatically compact \cite{AB}. Thus one obtains compact families of stable vector bundles over $(S,L)$. A result of Hong implies that, for a fixed stable vector bundle $E$ over $(S,L)$, the projectivisation $\pi: \pr(E)\to S$ admits a cscK metric in the class $k\pi^*c_1(L) + c_1(\O_{\pr(E)}(1))$ for $k \gg 0$ \cite{YH}. It clear from Hong's proof that one can take $k$ to be uniform in families when the family is parametrised by a compact space. Thus for any compact family of stable vector bundles, parametrised by a base $B$ say, one obtains a compact family of cscK manifolds parametrised by $B$. It follows from Corollary \ref{kodaira} that such a $B$ must be projective (of course, this can also be obtained from the vector bundle theory). 

One can obtain examples of compact families of extremal K\"aher manifolds with varying automorphism group by considering the case when the degree and rank of the stable are no longer coprime, allowing strictly polystable vector bundles, and applying later work of Hong and Br\"onnle \cite{YH2,TillBundle}. The extremal condition is more general than the condition of admitting a cscK metric, and in this example it seems likely that this does actually produce compact families of cscK manifolds with varying automorphism group in special cases.

\end{example}

 \bibliography{moduli}
\bibliographystyle{amsplain}

\end{document}